\newcommand{\bbZ}{\mathbb{Z}}
\newcommand{\Symd}{\operatorname{Symd}}
\newcommand{\Sym}{\operatorname{Sym}}
\newcommand{\Prp}{\operatorname{Prp}}
\newcommand{\Pcrd}{\operatorname{Pcrd}}
\newcommand{\Trp}{\operatorname{Trp}}
\newcommand{\Srp}{\operatorname{Srp}}
\newcommand{\Nrp}{\operatorname{Nrp}}
\newcommand{\Trd}{\operatorname{Trd}}
\newcommand{\Srd}{\operatorname{Srd}}
\newcommand{\Nrd}{\operatorname{Nrd}}
\newcommand{\End}{\operatorname{End}}
\newcommand{\invo}{\overline{\rule{2.5mm}{0mm}\rule{0mm}{4pt}}}
\newcommand{\sq}{q}
\newcommand{\diag}{\operatorname{diag}}
\newtheorem{theo}{Théorème}[section]
\newtheorem{lemm}[theo]{Lemme}
\newtheorem{coro}[theo]{Corollaire}
\newtheorem{prop}[theo]{Proposition}
\theoremstyle{definition}
\newtheorem{exam}[theo]{Exemple}
\title{Invariants de Witt des involutions de bas degré en
  caractéristique~2}
\author{Jean-Pierre Tignol}
\address{ICTEAM, UCLouvain, 4 avenue G. Lemaître, boîte L4.05.01,
  B-1348 Louvain-la-Neuve, Belgique}
\email{Jean-Pierre.Tignol@uclouvain.be}
\thanks{Ce travail a bénéficié d'une subvention du Fonds de la
  Recherche Scientifique--FNRS portant la référence~CDR~J.0159.19.}
\subjclass{16W10, 11E81}
\begin{document}
\begin{abstract} 
  À toute involution symplectique sur une algèbre simple centrale de
  degré~$8$ sur un corps de caractéristique~$2$ sont associées de
  manière canonique une $3$-forme de Pfister et une $5$-forme de
  Pfister quadratiques, qui détiennent des informations sur la
  structure de l'algèbre à involution. La même construction associe
  une $2$-forme de Pfister quadratique et une $4$-forme de Pfister
  quadratique à toute involution unitaire et une quasi $1$-forme de
  Pfister et une quasi $3$-forme de Pfister à toute involution
  orthogonale sur une algèbre simple centrale de degré~$4$.  
\end{abstract}

\maketitle

Dans tout ce texte, $A$ désigne une algèbre simple centrale sur un
corps $F$ et $\sigma$ une involution symplectique sur $A$,
c'est-à-dire un anti-automorphisme d'ordre~$2$ qui après extension des
scalaires à un corps de déploiement est adjoint à une forme bilinéaire
alternée non dégénérée. On note
\[
  \Symd(\sigma) = \{x+\sigma(x)\mid a\in A\}
\]
et, pour $a\in\Symd(\sigma)$, on note $\Prp_{\sigma, a}(X)$ le
polynôme pfaffien réduit de $a$, qui est le polynôme unitaire dont le
carré est le polynôme caractéristique réduit $\Pcrd_a(X)$. Si $\deg
A=2m$, alors
\[
  \Prp_{\sigma,a}(X) = X^m
  -\Trp_\sigma(a)X^{m-1}+\Srp_\sigma(a)X^{m-2}-\cdots
  +(-1)^m\Nrp_\sigma(a)
\]
pour $\Trp_\sigma$, $\Srp_\sigma$ et $\Nrp_\sigma\colon\Symd(\sigma)\to
F$ des formes de degré~$1$, $2$ et $m$ respectivement.

\begin{theo}
  \label{th:symp}
  Si $\deg A=8$ et la caractéristique de $F$ est~$2$, alors
  il existe une $3$-forme quadratique de Pfister $\pi_3$ et une
  $5$-forme quadratique de Pfister $\pi_5$ déterminées de manière
  unique à isométrie près par la propriété suivante: la classe de
  $\Srp_\sigma$ dans le groupe de Witt $I_qF$ se
  décompose comme suit:
  \[
    \Srp_\sigma= [1,1] + \pi_3+\pi_5
  \]
  où $[1,1]$ est la forme $X^2+XY+Y^2$. De plus,
  \begin{enumerate}
  \item[\emph{(i)}]
    $\pi_5$ est multiple de $\pi_3$, c'est-à-dire qu'il existe $a_1$,
  $a_2\in F^\times$ tels que
  $\pi_5=\langle1,a_1,a_2,a_1a_2\rangle\pi_3$;
\item[\emph{(ii)}]
  la forme $\pi_3$ est hyperbolique si et seulement si l'algèbre
  $A$ se décompose en produit tensoriel d'algèbres de
  quaternions stables sous $\sigma$;
\item[\emph{(iii)}]
  la forme $\pi_5$ est hyperbolique si et seulement si $\Symd(\sigma)$
  contient un élément non central dont le carré est central.
  \end{enumerate}
\end{theo}

La démonstration est donnée dans la section~\ref{sec:dem}: voir le
corollaire~\ref{coro:comp} et les propositions~\ref{prop:pi3} et
\ref{prop:pi5}. On peut cependant démontrer d'emblée 
l'unicité des formes $\pi_3$ et $\pi_5$: si
$\Srp_\sigma=[1,1]+\pi_3+\pi_5 = [1,1]+\pi'_3+\pi'_5$, alors
$\pi_3-\pi'_3=\pi'_5-\pi_5\in I^5_qF$; mais la classe de Witt de
$\pi_3-\pi'_3$ est représentée par une forme de dimension~$16$, donc
le \emph{Hauptsatz} d'Arason--Pfister \cite[Th.~23.7]{EKM} entraîne
$\pi_3=\pi'_3$, donc aussi $\pi_5=\pi'_5$.

La démonstration des autres assertions repose de manière essentielle
sur l'existence d'algèbres biquadratiques étales dans $\Symd(\sigma)$,
qui est démontrée dans \cite{BGBT1} et exploitée de manière analogue
dans~\cite{BGBT3}. Dans cette dernière référence, on trouve une
construction de la forme $\pi_3$ 
(qui y est appelée \emph{forme de Pfister discriminante}),
ainsi qu'une preuve du critère de décomposabilité correspondant, en
toute caractéristique. Dans le présent travail, la restriction à la
caractéristique~$2$ permet d'importantes simplifications ainsi que la
définition de la forme~$\pi_5$, qui ne semble pas admettre d'analogue
en caractéristique différente de~$2$.

Les techniques utilisées dans la démonstration du
théorème~\ref{th:symp} peuvent 
aussi servir pour les involutions unitaires et orthogonales sur les
algèbres simples centrales de degré~$4$: les résultats correspondants
sont détaillés dans les théorèmes~\ref{th:unit} et \ref{th:orth}
énoncés dans la dernière section. Comme dans le cas
symplectique, \cite{BGBT3} présente des constructions analogues, mais
limitées aux formes de Pfister
\guillemotleft\,discriminantes\,\guillemotright, en toute 
caractéristique 
dans le cas unitaire et en caractéristique différente de~$2$ dans le
cas orthogonal.

\section{La forme $\Srp_\sigma$}

Dans cette section, $\deg A=2m$ avec $m\geq1$ et la caractéristique de
$F$ est arbitraire. On note $\Trd_A$ la trace réduite de $A$ et
$\Srd_A$ la forme quadratique sur $A$ qui donne le
coefficient du terme de degré $2m-2$ du polynôme caractéristique réduit.

\begin{lemm}
  \label{lem:Srp}
  Pour $x$, $y\in \Symd(\sigma)$ avec $x=x'+\sigma(x')$ et
  $y=y'+\sigma(y')$, on a $\Trp_\sigma(x)=\Trd_A(x')$ et 
  \[
    \Srp_\sigma(x+y) - \Srp_\sigma(x) -
    \Srp_\sigma(y) = \Trp_\sigma(x)\Trp_\sigma(y)
    - \Trd_A(xy').
  \]
\end{lemm}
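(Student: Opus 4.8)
The plan is to derive both identities from the defining relation $\Prp_{\sigma,a}(X)^2=\Pcrd_a(X)$ by comparing coefficients, and to circumvent the apparent divisions by~$2$ through a specialization argument. Since all the forms involved are compatible with scalar extension, it suffices to establish the two identities after extending scalars to $\Fsep$, where $A$ splits and $\sigma$ becomes adjoint to a nondegenerate alternating form. In that situation the coefficients of $\Prp$ and of $\Pcrd$, as well as $\Trd_A$ and $\Srd_A$, are specializations of universal polynomial forms with coefficients in $\bbZ$ in the entries of a generic symmetrized element $x'$ and of a generic alternating form; consequently an identity of such integral polynomial forms holds in every characteristic as soon as it holds in characteristic~$0$. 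I may therefore assume $\operatorname{char}F=0$ and cancel factors of~$2$ freely, provided the final formulas turn out to be integral.

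Squaring $\Prp_{\sigma,a}(X)$ and comparing with $\Pcrd_a(X)$ in degrees $2m-1$ and $2m-2$ gives the scalar relations $\Trd_A(a)=2\Trp_\sigma(a)$ and $\Srd_A(a)=\Trp_\sigma(a)^2+2\Srp_\sigma(a)$. The first, together with $\Trd_A(x)=\Trd_A(x')+\Trd_A(\sigma(x'))=2\Trd_A(x')$, yields $\Trp_\sigma(x)=\Trd_A(x')$ after cancelling~$2$; both sides being integral polynomials in $x'$, the identity persists in characteristic~$2$. For the second identity I polarize the quadratic relation $\Srd_A=\Trp_\sigma^2+2\Srp_\sigma$. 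Writing $b_\varphi(x,y)=\varphi(x+y)-\varphi(x)-\varphi(y)$ for the polar form of a quadratic form $\varphi$, I have $b_{\Trp_\sigma^2}(x,y)=2\Trp_\sigma(x)\Trp_\sigma(y)$ since $\Trp_\sigma$ is linear, together with the classical formula $b_{\Srd_A}(x,y)=\Trd_A(x)\Trd_A(y)-\Trd_A(xy)$ for the second coefficient of the reduced characteristic polynomial. Combining these with $\Trd_A(x)=2\Trp_\sigma(x)$ gives
\[
  b_{\Srp_\sigma}(x,y)=\Trp_\sigma(x)\Trp_\sigma(y)-\tfrac12\Trd_A(xy).
\]

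It then remains to identify $\tfrac12\Trd_A(xy)$ with $\Trd_A(xy')$. Since $x=x'+\sigma(x')$ lies in $\Symd(\sigma)$ one has $\sigma(x)=x$, and using $\Trd_A\circ\sigma=\Trd_A$ together with the cyclicity of the reduced trace and $y=y'+\sigma(y')$ I compute
\[
  \Trd_A\bigl(x\sigma(y')\bigr)=\Trd_A\bigl(\sigma(x\sigma(y'))\bigr)=\Trd_A\bigl(y'\sigma(x)\bigr)=\Trd_A(y'x)=\Trd_A(xy'),
\]
whence $\Trd_A(xy)=\Trd_A(xy')+\Trd_A(x\sigma(y'))=2\Trd_A(xy')$ and $\tfrac12\Trd_A(xy)=\Trd_A(xy')$. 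This turns the formula into $b_{\Srp_\sigma}(x,y)=\Trp_\sigma(x)\Trp_\sigma(y)-\Trd_A(xy')$, whose right-hand side is now free of any division by~$2$.

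The only genuine difficulty lies in the characteristic-$2$ case, where the cancellations of~$2$ performed above are not directly available. The specialization argument from characteristic~$0$, resting precisely on the integrality of the final expressions $\Trd_A(x')$ and $\Trp_\sigma(x)\Trp_\sigma(y)-\Trd_A(xy')$, is what makes the conclusion legitimate there; I expect the care needed to present both sides as genuine integral polynomial forms over $\bbZ$ to be the main point requiring attention.
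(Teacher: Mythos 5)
Your proof is correct and takes essentially the same approach as the paper: reduce to the split case, regard both sides as specializations of polynomial forms with coefficients in $\bbZ$ in the entries of generic matrices $x'$, $y'$, and cancel factors of $2$ where $2$ is not a zero divisor — the paper does this directly over $\bbZ$, while you do it in characteristic~$0$ and then invoke integrality, which amounts to the same thing. Your polarization of $\Srd_A=\Trp_\sigma^2+2\Srp_\sigma$ together with the identity $\Trd_A(x\sigma(y'))=\Trd_A(xy')$ reproduces the paper's computation (via \cite[(0.2)]{BoI}) step for step.
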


\begin{proof}
  Il suffit d'établir ces relations après extension des scalaires à un
  corps de déploiement. On peut donc supposer que $A$ est une algèbre
  de matrices et que $\sigma$ est adjointe à une forme alternée non
  dégénérée. Alors $x'$ et $y'$ sont obtenus par spécialisation de
  matrices génériques; il suffit donc de montrer que les
  relations valent lorsque $x'$ et $y'$ sont des matrices génériques
  sur $\bbZ$. En comparant les
  coefficients des deux membres de l'équation
  \[
    (X^m-\Trp_\sigma(x)X^{m-1}+\Srp_\sigma(x)X^{m-2}-\cdots)^2 =
    X^{2m}-\Trd_A(x)X^{2m-1}+\Srd_A(x)X^{2m-2}-\cdots
  \]
  on obtient
  \begin{equation}
    \label{eq:1}
    2\Trp_\sigma(x)=\Trd_A(x) \qquad\text{et}\qquad
    2\Srp_\sigma(x)+\Trp_\sigma(x)^2=\Srd_A(x).
  \end{equation}
  Or, $\Trd_A(x)=\Trd_A(x'+\sigma(x'))=2\Trd_A(x')$, donc
  $2\Trp_\sigma(x)=2\Trd_A(x')$. Comme $2$ n'est pas un diviseur de
  zéro dans $\bbZ$, il en résulte $\Trp_\sigma(x)=\Trd_A(x')$.

  Par ailleurs, la relation suivante est établie en~\cite[(0.2)]{BoI}:
  \[
    \Srd_A(x+y)-\Srd_A(x)-\Srd_A(y) = \Trd_A(x)\Trd_A(y)-
    \Trd_A(xy).
  \]
  En y remplaçant $\Srd_A(x+y)$, $\Srd_A(x)$ et $\Srd_A(y)$
  par les expressions données par la deuxième équation
  de~\eqref{eq:1}, on obtient
  \[
    2\Srp_\sigma(x+y)-2\Srp_\sigma(x)-2\Srp_\sigma(y) +
    2\Trp_\sigma(x)\Trp_\sigma(y) =
    \Trd_A(x)\Trd_A(y)-\Trd_A(xy).
  \]
  Vu la première équation de~\eqref{eq:1}, et vu que $\Trd_A(xy) =
  \Trd_A(xy') + \Trd_A(x\sigma(y'))=2\Trd_A(xy')$ (la dernière égalité
  résultant du fait que $x=\sigma(x)$), on en déduit
  \[
    2\Srp_\sigma(x+y)-2\Srp_\sigma(x)-2\Srp_\sigma(y) =
    2\Trp_\sigma(x) \Trp_\sigma(y) - 2\Trd_A(xy').
  \]
  La deuxième formule de l'énoncé en découle car $2$ n'est pas
  diviseur de zéro dans $\bbZ$.
\end{proof}

Soit $b_\sigma\colon\Symd(\sigma)\times\Symd(\sigma)\to F$ la forme
polaire de $\Srp_\sigma$, donnée par
\[
  b_\sigma(x,y) = \Srp_\sigma(x+y)-\Srp_\sigma(x) -
  \Srp_\sigma(y)
  \quad\text{pour $x$, $y\in\Symd(\sigma)$.}
\]
La forme quadratique $\Srp_\sigma$ est dite non singulière lorsque le
radical de sa forme polaire $b_\sigma$ est réduit à $\{0\}$.

\begin{coro}
  \label{coro:nondeg}
  Si la caractéristique de $F$ ne divise pas $m-1$, la forme
  $\Srp_\sigma$ est non singulière.
\end{coro}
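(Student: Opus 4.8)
The plan is to show directly that the radical of $b_\sigma$ is trivial. Let $x\in\Symd(\sigma)$ lie in the radical, write $x=x'+\sigma(x')$, and set $t=\Trp_\sigma(x)$. The first relation of Lemme~\ref{lem:Srp} gives $\Trp_\sigma(y)=\Trd_A(y')$ for every $y=y'+\sigma(y')$, so the formula of that lemma for the polar form becomes $b_\sigma(x,y)=t\,\Trd_A(y')-\Trd_A(xy')=\Trd_A\bigl((t\cdot1-x)\,y'\bigr)$. As $y'$ runs through all of $A$, the element $y=y'+\sigma(y')$ runs through all of $\Symd(\sigma)$; hence the vanishing of $b_\sigma(x,\cdot)$ on $\Symd(\sigma)$ is equivalent to $\Trd_A\bigl((t\cdot1-x)\,y'\bigr)=0$ for all $y'\in A$.

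The next step is to invoke the nondegeneracy of the reduced trace bilinear form $(a,b)\mapsto\Trd_A(ab)$ on the central simple algebra $A$. From the identity above it follows that $t\cdot1-x=0$, that is, $x=t\cdot1$ is a scalar. I expect this passage---from ``$x$ in the radical'' to ``$x$ central''---to be the crux of the argument: everything upstream is a rearrangement of Lemme~\ref{lem:Srp}, and everything downstream is a short computation. The point deserving care is the parametrization by $y'\in A$ rather than by $y\in\Symd(\sigma)$, which is what allows the nondegeneracy of the trace form on all of $A$ to be brought to bear.

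It then remains to pin down the scalar $t$. If $t\neq0$, then $1=t^{-1}x\in\Symd(\sigma)$, so $\Prp_{\sigma,1}$ is defined; since the reduced characteristic polynomial of $1$ is $\Pcrd_1(X)=(X-1)^{2m}$, its pfaffian is $\Prp_{\sigma,1}(X)=(X-1)^m$, whence $\Trp_\sigma(1)=m$. The linearity of $\Trp_\sigma$ then yields $t=\Trp_\sigma(x)=\Trp_\sigma(t\cdot1)=tm$, so $t(m-1)=0$ in $F$. Under the hypothesis that the characteristic of $F$ does not divide $m-1$, the scalar $m-1$ is invertible, forcing $t=0$ and hence $x=0$; and if $t=0$ at the outset, then $x=0$ immediately. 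Thus the radical of $b_\sigma$ reduces to $\{0\}$, which is precisely the non-singularity of $\Srp_\sigma$.
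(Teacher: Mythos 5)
Your proof is correct and follows essentially the same route as the paper's: both rewrite the radical condition via Lemme~\ref{lem:Srp} as $\Trd_A\bigl((\Trp_\sigma(x)\cdot1-x)y'\bigr)=0$ for all $y'\in A$, invoke the nondegeneracy of the reduced trace form to conclude $x=\Trp_\sigma(x)\in F$, and then use $\Trp_\sigma(t\cdot 1)=tm$ to force $(m-1)t=0$, hence $x=0$. Your added care about parametrizing $\Symd(\sigma)$ by $y'\in A$ and about justifying $1\in\Symd(\sigma)$ before computing $\Trp_\sigma(1)=m$ only makes explicit what the paper leaves implicit.
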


\begin{proof}
  Si $x$ est dans le radical de $b_\sigma$, c'est-à-dire que
  $b_\sigma(x,y)=0$ pour tout $y\in\Symd(\sigma)$, alors le
  lemme~\ref{lem:Srp} donne
  \[
    \Trp_\sigma(x)\Trd_A(y') - \Trd_A(xy')=0
    \qquad\text{pour tout $y'\in A$.}
  \]
  Alors $\Trd_A\bigl((\Trp_\sigma(x)-x)y'\bigr)=0$ pour tout $y'\in
  A$. Comme le radical de la forme bilinéaire $\Trd_A(XY)$ est nul, il
  en découle $x=\Trp_\sigma(x)\in F$. Or, $\Trp_\sigma(x)=mx$
  pour $x\in F$, donc la dernière équation entraîne $(m-1)x=0$,
  d'où $x=0$ si $m-1$ est inversible dans $F$.
\end{proof}

\section{Décomposition orthogonale}
\label{sec:orth}

Dans cette section, on suppose $\deg A=8$ mais on n'impose aucune
restriction sur la caractéristique de $F$. D'après~\cite[Th.~7.4,
Th.~4.1]{BGBT1} on peut trouver dans $\Symd(\sigma)$ une $F$-algèbre
étale $L$ qui est produit tensoriel de deux $F$-algèbres étales
quadratiques et telle que $A$ est libre comme $L$-module. Comme le
Vierergruppe est le seul sous-groupe abélien élémentaire du groupe de
permutations de quatre éléments qui agisse
transitivement, il y a une unique manière (à automorphisme du groupe
près) de
définir sur $L$ une action d'un groupe $G$ abélien élémentaire qui en
fait une $F$-algèbre $G$-galoisienne. Soit
\[
  G=\{1,\,\alpha_1,\,\alpha_2,\,\alpha_3\} \qquad\text{où
    $\alpha_1^2=\alpha_2^2=\alpha_3^2=1$}.
\]
Pour $i=1$, $2$, $3$ on note $L_i=L^{\alpha_i}$ la sous-algèbre de $L$
fixe sous $\alpha_i$. C'est une $F$-algèbre quadratique étale, dont on
note $T_i\colon L_i\to F$ et $N_i\colon L_i\to F$ la trace et la
norme. On définit aussi
\[
  W_i=\{x\in\Symd(\sigma)\mid x\ell = \alpha_i(\ell)x \text{ pour tout
    $\ell\in L$}\}.
\]
La multiplication dans $A$ fait de chaque $W_i$ un module à droite sur
$L_i$. 

\begin{prop}
  \label{prop:decorth}
  Pour la forme polaire $b_\sigma$ de $\Srp_\sigma$, l'espace
  $\Symd(\sigma)$ se décompose en somme orthogonale:
  $\Symd(\sigma)=L\stackrel{\perp}{\oplus} W_1
  \stackrel{\perp}{\oplus} W_2 \stackrel{\perp}{\oplus}W_3$.
  De plus, pour chaque $i=1$, $2$, $3$ le $L_i$-module $W_i$ est libre
  de rang~$4$ et l'élévation au carré $x\mapsto x^2$ définit une forme
  quadratique non singulière $\sq_i\colon W_i\to L_i$. Pour $x\in W_i$
  on a $\Trp_\sigma(x)=0$ et 
  $\Srp_\sigma(x)=-T_i(x^2)$, quel que soit $i=1$, $2$, $3$.
\end{prop}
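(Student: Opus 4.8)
The plan is to reduce everything to a splitting field and exploit the $G$-grading that $L$ induces on $A$. First I would extend scalars to a separable closure $\Fsep$, so that $A=\End_{\Fsep}V$ with $\sigma$ adjoint to a nondegenerate alternating form $b$ on $V$, and $L=\prod_{h\in G}\Fsep\,\epsilon_h$ splits with its four primitive idempotents $\epsilon_h$ permuted regularly by $G$. Writing $V_h=\epsilon_hV$, the inclusion $L\subseteq\Sym(\sigma)$ forces the $\epsilon_h$ to be self-adjoint, so the eigenspaces $V_h$ are pairwise $b$-orthogonal, each nondegenerate of dimension~$2$. This is the model in which all the claimed identities will be checked; since every assertion concerns either an $F$-rational subspace, a dimension, or an equality of $F$-forms, it descends to $F$.

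Next I would record the $G$-grading $A=\bigoplus_{g\in G}A_g$, where $A_g=\{x\mid x\ell=g(\ell)x\text{ pour tout }\ell\in L\}$ satisfies $A_gA_h\subseteq A_{gh}$, $A_1=C_A(L)$, and (in the split model) $A_g=\bigoplus_h\Hom(V_h,V_{gh})$, whence $\dim_FA_g=16$. Applying $\sigma$ to the relation $x\ell=g(\ell)x$ and using $\sigma|_L=\mathrm{id}$ and $g^{-1}=g$ shows $\sigma(A_g)=A_g$; hence the symmetrization $x\mapsto x+\sigma(x)$ respects the grading and $\Symd(\sigma)=\bigoplus_g\bigl(A_g\cap\Symd(\sigma)\bigr)$, with $A_{\alpha_i}\cap\Symd(\sigma)=W_i$. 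On the degree-$1$ piece, $\sigma$ restricts to the canonical symplectic involution on each block $\End(V_h)=M_2$, whose symmetrized elements are the scalars, giving $C_A(L)\cap\Symd(\sigma)=L$ and the decomposition $\Symd(\sigma)=L\oplus W_1\oplus W_2\oplus W_3$. Orthogonality is then immediate from le lemme~\ref{lem:Srp}: the reduced trace vanishes on $A_g$ for $g\neq1$ (these blocks are off-diagonal), so $\Trp_\sigma$ vanishes on each $W_i$, and for $x,y$ in two distinct summands the formula $b_\sigma(x,y)=\Trp_\sigma(x)\Trp_\sigma(y)-\Trd_A(xy')$ vanishes because at least one of the two lies in some $W_i$, killing the corresponding Pfaffian-trace factor, while $xy'$ falls into a component $A_g$ with $g\neq1$.

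It remains to analyze $W_i$ as an $L_i$-module and the squaring form. Each $W_i$ is a projective $L_i$-module of constant rank~$4$ (rank $4$ over each factor of $L_i\otimes\Fsep$), hence free of rank~$4$. The crucial point is that $x^2$ lands in $L_i$, not merely in $C_A(L)$: writing $x\in W_i$ in the split model as a map $u\colon V_h\to V_{\alpha_ih}$ together with its adjoint between the two planes of an $\alpha_i$-orbit, the composite $\sigma(u)u$ is a scalar because the form $b$ pulled back along $u$ to a symplectic plane is a scalar multiple of $b$, and the identity $u(\sigma(u)u)=(u\sigma(u))u$ forces the two blocks of the orbit to carry the same scalar; thus $x^2$ acts as a scalar on each orbit-plane and so $x^2\in L_i$. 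I expect this ``scalar multiplier'' step to be the main obstacle, as it is exactly what makes $\sq_i$ genuinely $L_i$-valued. Granting it, $\sq_i(x)=x^2$ is an $L_i$-quadratic form (the homogeneity $\sq_i(x\ell)=\ell^2\sq_i(x)$ follows from $x\ell=\ell x$ for $\ell\in L_i$), and computing $\Pcrd_x=\bigl[(X^2-\mu_1)(X^2-\mu_2)\bigr]^2$ gives $\Prp_{\sigma,x}(X)=(X^2-\mu_1)(X^2-\mu_2)$, whence $\Srp_\sigma(x)=-(\mu_1+\mu_2)=-T_i(x^2)$. Finally, since $\mathrm{char}\,F=2$ does not divide $m-1=3$, le corollaire~\ref{coro:nondeg} makes $b_\sigma$ nonsingular on $\Symd(\sigma)$, hence nondegenerate on the orthogonal summand $W_i$; as $b_\sigma|_{W_i}=-T_i\circ b_{\sq_i}$ is the transfer along the separable trace $T_i$ of the polar form of $\sq_i$, this forces the polar form of $\sq_i$ to be nondegenerate over $L_i$, i.e. $\sq_i$ non-singular.
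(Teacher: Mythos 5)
Your proof follows the same overall strategy as the paper's --- check everything after extending scalars to a split situation --- but the execution is genuinely different: where the paper writes $A=M_4(Q)$ with $Q=M_2(F)$ and computes with the quaternion conjugation, you use the $G$-grading $A=\bigoplus_{g\in G}A_g$ induced by $L$, identify $\Symd(\sigma)\cap A_1=L$ block by block, obtain the orthogonality of $L$, $W_1$, $W_2$, $W_3$ from the lemme~\ref{lem:Srp} (as the paper also does), and replace the quaternion identity $x\overline{x}=n_Q(x)$ by a pullback argument: for $u\colon V_h\to V_{\alpha_ih}$ the form $b(u\,\cdot,u\,\cdot)$ is alternating on a plane, hence a scalar multiple of $b\rvert_{V_h}$, so $\sigma(u)u$ is a scalar. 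That step, the freeness of $W_i$ over $L_i$, and the computation of $\Prp_{\sigma,x}$ giving $\Trp_\sigma(x)=0$ and $\Srp_\sigma(x)=-T_i(x^2)$ are all correct, and valid in every characteristic.

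There is, however, a genuine gap in your last step, the non-singularity of $\sq_i$. The proposition sits in the section~\ref{sec:orth}, where no restriction is imposed on the characteristic of $F$; you nevertheless assume that the characteristic is $2$ in order to invoke the corollaire~\ref{coro:nondeg}. Your argument actually breaks down precisely in characteristic $3$, the one case excluded by that corollaire when $m=4$: there $\Srp_\sigma$ is genuinely singular, since the lemme~\ref{lem:Srp} gives $b_\sigma(1,y)=\Trp_\sigma(1)\Trp_\sigma(y)-\Trd_A(y')=(m-1)\Trp_\sigma(y)$, which vanishes identically because $m-1=3$; thus $1$ lies in the radical of $b_\sigma$ and there is no nondegenerate global form whose restriction to $W_i$ you could transfer down to $L_i$. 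The paper avoids this entirely by proving non-singularity of $\sq_i$ directly in the split model: $W_i\simeq Q_{L_i}$ and $\sq_i\simeq (n_Q)_{L_i}$, the norm (determinant) form of the split quaternion algebra, which is non-singular over any field. Your own model admits the same fix: on an orbit block $\{u+\sigma(u)\mid u\in\Hom(V_h,V_{\alpha_ih})\}$, choosing symplectic bases identifies $\sigma(u)$ with the adjugate of the matrix of $u$, so that $\sq_i(u+\sigma(u))=\det u$, and the determinant form on $M_2$ is non-singular in every characteristic. With that substitution your proof becomes complete and characteristic-free; as written, it only establishes the proposition when the characteristic of $F$ is different from~$3$.
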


\begin{proof}
  Il suffit de voir que les énoncés valent après extension des
  scalaires. On peut donc supposer que $A$ et $L$ sont déployées: soit
  $A=\End_FV$ pour un espace vectoriel $V$ de dimension~$8$ et
  $L=Fp_1\oplus Fp_2\oplus Fp_3\oplus Fp_4$ où $p_1$, \ldots, $p_4$
  sont des projections de $V$ sur des sous-espaces $V_1$, \ldots,
  $V_4$ supplémentaires, c'est-à-dire que $V=V_1\oplus\cdots\oplus
  V_4$. Comme $A$ est libre comme $L$-module, les $V_i$ sont tous de
  même dimension, donc $\dim V_i=2$ pour tout~$i$.

  L'involution $\sigma$ est adjointe à une forme bilinéaire alternée
  non singulière sur $V$. Comme chaque $p_i$ est symétrique sous
  $\sigma$, les espaces $V_i$ sont orthogonaux deux à deux. En
  concaténant des bases symplectiques de ceux-ci, on obtient une base
  symplectique de $V$, par rapport à laquelle on peut représenter $A$
  par des matrices: $A=M_8(F)$. Pour la facilité des calculs, on
  décompose les matrices en blocs d'ordre~$2$, ce qui conduit à
  représenter $A=M_4(Q)$ où $Q=M_2(F)$ est l'algèbre de quaternions
  déployée. Dans cette représentation, $\sigma$ est donnée par
  $\sigma(a)=\overline{a}^t$, où $t$ est la transposition et $\invo$
  est la conjugaison de $Q$ (qui est son unique involution
  symplectique), agissant sur chaque entrée des matrices de
  $M_4(Q)$. L'algèbre $L$ est alors identifiée à l'algèbre diagonale
  dont les entrées sont dans $F$: pour $x_1$, \ldots, $x_4\in F$
  \[
    x_1p_1+x_2p_2+x_3p_3+x_4p_4 = \diag(x_1,\,x_2,\,x_3,\,x_4)
    \in M_4(Q).
  \]
  En utilisant $\Symd(\invo)=F$ dans $Q$, on vérifie sans peine:
  \[
    \Symd(\sigma) = \left\{
      \begin{pmatrix}
        x_1&x_{12}&x_{13}&x_{14}\\
        \overline{x_{12}}&x_2&x_{23}&x_{24}\\
        \overline{x_{13}}&\overline{x_{23}}&x_3&x_{34}\\
        \overline{x_{14}}&\overline{x_{24}}&\overline{x_{34}}&x_4
      \end{pmatrix}
      \mid
      x_i\in F \text{ et } x_{ij}\in Q
      \right\}
      .
    \]
    Quitte à renuméroter les $p_i$, on peut supposer que $\alpha_1$
    échange $p_1$ et $p_2$ (donc aussi $p_3$ et $p_4$) et que
    $\alpha_2$ échange $p_1$ et $p_3$ (donc aussi $p_2$ et
    $p_4$). Alors
    \begin{gather*}
      L_1=\{\diag(x_1,\,x_1,\,x_2,\,x_2)\mid x_1,\;x_2\in F\},
      \qquad
      L_2=\{\diag(x_1,\,x_2,\,x_1,\,x_2)\mid x_1,\;x_2\in F\},
      \\
      L_3=\{\diag(x_1,\,x_2,\,x_2,\,x_1)\mid x_1,\;x_2\in F\}.
    \end{gather*}
    Le calcul donne
    \[
      W_1 = \left\{
      \begin{pmatrix}
        0&x_{12}&&\\
        \overline{x_{12}}&0&&\\
        &&0&x_{34}\\
        &&\overline{x_{34}}&0
      \end{pmatrix}
      \mid
      x_{12}, x_{34}\in Q
    \right\},
  \]
  \[
    W_2 = \left\{
      \begin{pmatrix}
        &&x_{13}&0\\
        &&0&x_{24}\\
        \overline{x_{13}}&0&&\\
        0&\overline{x_{24}}&&
      \end{pmatrix}
      \mid
      x_{13}, x_{24}\in Q
    \right\}
  \]
  et
  \[
    W_3 = \left\{
      \begin{pmatrix}
        &&0&x_{14}\\
        &&x_{23}&0\\
        0&\overline{x_{23}}&&\\
        \overline{x_{14}}&0&&
      \end{pmatrix}
      \mid
      x_{14}, x_{23}\in Q
    \right\}.
  \]
  Il est donc clair que $\Symd(\sigma)=L\oplus W_1\oplus W_2\oplus
  W_3$. On voit aussi que pour tout $i=1$, $2$, $3$ le $L_i$-module
  $W_i$ est isomorphe à $Q_{L_i}\simeq M_2(F)\times M_2(F)$ et que
  $\sq_i$ est une forme quadratique isométrique à $(n_Q)_{L_i}$, où
  $n_Q\colon Q\to F$ est la norme réduite (qui est le déterminant de
  $M_2(F)$); cette forme quadratique est donc non singulière.

  Pour $x\in W_i$ on peut
  écrire $x=x'+\sigma(x')$ avec $x'\in M_4(Q)$ une matrice
  triangulaire dont la diagonale est nulle. Le lemme~\ref{lem:Srp}
  donne alors $\Trp_\sigma(x)=\Trd_A(x')=0$. Pour $y\in W_j$ avec
  $j\neq i$ ou $y\in L$ on a aussi $\Trd_A(x'y)=0$, donc le
  lemme~\ref{lem:Srp} donne aussi $b_\sigma(x,y)=0$, ce qui établit
  que $L$, $W_1$, $W_2$ et $W_3$ sont orthogonaux deux à deux.

  Un calcul direct montre que le polynôme caractéristique réduit de $
  \bigl(\begin{smallmatrix}
    0&x\\ \overline x&0
  \end{smallmatrix}\bigr)\in M_2(Q)$ est $(X^2-n_Q(x))^2$. Dès
  lors, pour
  \[
    x=
    \begin{pmatrix}
        0&x_{12}&&\\
        \overline{x_{12}}&0&&\\
        &&0&x_{34}\\
        &&\overline{x_{34}}&0
      \end{pmatrix}\in W_1,\qquad\text{avec $x_{12}$, $x_{34}\in Q$,}
    \]
    on a $\Prp_{\sigma,x}(X)=(X^2-n_Q(x_{12}))(X^2-n_Q(x_{34}))$, donc
    $\Srp_\sigma(x)=-n_Q(x_{12})-n_Q(x_{34}) = -T_1(x^2)$. On démontre
    de même que $\Srp_\sigma(x)=-T_i(x^2)$ pour $x\in W_i$ avec $i=2$
    ou $3$.
\end{proof}

\section{Démonstration du théorème~\ref{th:symp}}
\label{sec:dem}

Poursuivant avec les mêmes notations que dans la section précédente,
on suppose à présent que la caractéristique de~$F$ est~$2$. D'après le
corollaire~\ref{coro:nondeg}, la forme quadratique $\Srp_\sigma$ est
non singulière; il en est donc de même de ses restrictions aux espaces
$W_i$, vu la proposition~\ref{prop:decorth}.

\begin{prop}
  \label{prop:compo}
  Pour $x_1\in W_1$ et $x_2\in W_2$, on pose
  $x_1*x_2=x_1x_2+x_2x_1$. Alors $x_1*x_2\in W_3$ et
  $\Srp_\sigma(x_1*x_2) = \Srp_\sigma(x_1)\Srp_\sigma(x_2)$.
\end{prop}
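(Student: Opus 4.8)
Le plan est de séparer l'appartenance $x_1*x_2\in W_3$ de l'identité numérique. Pour la première, je travaillerais de manière intrinsèque, sans hypothèse sur la caractéristique, en exploitant la loi du Vierergruppe $\alpha_1\alpha_2=\alpha_3$. Pour $x_1\in W_1$, $x_2\in W_2$ et tout $\ell\in L$ on a
\[
  (x_1x_2)\ell = x_1\,\alpha_2(\ell)\,x_2 = \alpha_1\alpha_2(\ell)\,x_1x_2 = \alpha_3(\ell)\,x_1x_2,
\]
et de même $(x_2x_1)\ell=\alpha_3(\ell)\,x_2x_1$, donc $x_1*x_2$ vérifie la relation de commutation qui définit $W_3$. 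Par ailleurs $x_1$ et $x_2$ étant symétriques, $\sigma(x_1x_2)=\sigma(x_2)\sigma(x_1)=x_2x_1$, d'où $x_1*x_2=x_1x_2+\sigma(x_1x_2)\in\Symd(\sigma)$. Ces deux observations donnent $x_1*x_2\in W_3$.

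Pour l'identité $\Srp_\sigma(x_1*x_2)=\Srp_\sigma(x_1)\Srp_\sigma(x_2)$, les deux membres étant des fonctions polynomiales des coordonnées de $x_1$ et $x_2$ à coefficients dans $F$, il suffit, comme dans les démonstrations précédentes, de la vérifier après extension des scalaires à un corps déployant; je reprendrais alors le modèle matriciel $A=M_4(Q)$ de la proposition~\ref{prop:decorth}. En écrivant $x_1\in W_1$ au moyen de $a=x_{12}$, $b=x_{34}\in Q$ et $x_2\in W_2$ au moyen de $c=x_{13}$, $d=x_{24}\in Q$, un calcul des produits $x_1x_2$ et $x_2x_1$ par blocs donne que $x_1*x_2\in W_3$ a pour coordonnées $x_{14}=ad+cb$ et $x_{23}=\overline a\,c+d\,\overline b$. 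La proposition~\ref{prop:decorth} (formule $\Srp_\sigma(x)=-T_i(x^2)$ et isométrie de $\sq_i$ avec $(n_Q)_{L_i}$) fournit alors
\[
  \Srp_\sigma(x_1*x_2) = -n_Q(ad+cb)-n_Q(\overline a\,c+d\,\overline b),
  \qquad
  \Srp_\sigma(x_1)\Srp_\sigma(x_2)=\bigl(n_Q(a)+n_Q(b)\bigr)\bigl(n_Q(c)+n_Q(d)\bigr),
\]
ce qui ramène tout à une identité sur la norme réduite de $Q$.

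Je développerais enfin chaque membre grâce à la multiplicativité de $n_Q$ et à la relation polaire $n_Q(u+v)=n_Q(u)+n_Q(v)+\Trd_Q(u\overline v)$. Les quatre produits $n_Q(a)n_Q(c)$, $n_Q(a)n_Q(d)$, $n_Q(b)n_Q(c)$ et $n_Q(b)n_Q(d)$ apparaissent identiquement de part et d'autre; ne subsiste donc que la somme des deux termes croisés $\Trd_Q(ad\,\overline b\,\overline c)$ et $\Trd_Q(\overline a\,c\,b\,\overline d)$. C'est ici l'obstacle principal, et le seul endroit où la caractéristique~$2$ intervient: en combinant l'invariance de la trace réduite par conjugaison, $\Trd_Q(w)=\Trd_Q(\overline w)$, avec sa cyclicité, on a
\[
  \Trd_Q(ad\,\overline b\,\overline c)=\Trd_Q(\overline{ad\,\overline b\,\overline c})=\Trd_Q(c\,b\,\overline d\,\overline a)=\Trd_Q(\overline a\,c\,b\,\overline d),
\]
de sorte que la somme des deux termes croisés vaut $2\,\Trd_Q(\overline a\,c\,b\,\overline d)=0$. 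L'identité en résulte. C'est précisément cette annulation, propre à la caractéristique~$2$, qui fait de $\Srp_\sigma$ une forme multiplicative sur les sous-espaces $W_i$.
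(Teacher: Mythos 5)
Votre démonstration est correcte et, pour l'identité multiplicative elle-même, suit essentiellement la même voie que celle de l'article~: réduction au cas déployé $A=M_4(Q)$, calcul par blocs donnant les coordonnées $ad+cb$ et $\overline{a}c+d\overline{b}$ de $x_1*x_2$, développement au moyen de la forme polaire de $n_Q$, puis annulation en caractéristique~$2$ des deux termes croisés $t_Q(ad\overline{b}\,\overline{c})$ et $t_Q(\overline{a}cb\overline{d})$. L'article se borne à affirmer l'égalité de ces deux traces~; vous la justifiez proprement par l'invariance de la trace réduite sous conjugaison combinée à sa cyclicité, ce qui comble un petit non-dit de la preuve originale. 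La seule différence de fond concerne l'appartenance $x_1*x_2\in W_3$~: l'article la lit directement sur le calcul matriciel dans le modèle déployé, tandis que vous la démontrez de façon intrinsèque, en utilisant la relation $\alpha_1\alpha_2=\alpha_3$ du Vierergruppe pour la condition de commutation avec $L$, et l'observation que $x_1x_2+x_2x_1=x_1x_2+\sigma(x_1x_2)$ appartient à $\Symd(\sigma)$ par définition même de cet espace (les éléments de $W_1$ et $W_2$ étant symétriques). Cet argument vaut en toute caractéristique, sans extension des scalaires ni choix de coordonnées~; il est à la fois plus général et plus transparent que la vérification calculatoire de l'article, et il sépare nettement la partie structurelle (l'appartenance) de la partie métrique (la multiplicativité), qui seule requiert la caractéristique~$2$.
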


\begin{proof}
  Il suffit de voir que l'énoncé vaut après extension des
  scalaires. On peut donc supposer que $L$ et $A$ sont déployées, et
  reprendre les notations de la démonstration de la
  proposition~\ref{prop:decorth}. Si $x_{12}$, $x_{34}$, $x_{13}$,
  $x_{24}\in Q$ sont tels que
  \[
    x_1=
    \begin{pmatrix}
        0&x_{12}&&\\
        \overline{x_{12}}&0&&\\
        &&0&x_{34}\\
        &&\overline{x_{34}}&0
      \end{pmatrix}\in W_1
      \quad\text{et}\quad
      x_2=
      \begin{pmatrix}
        &&x_{13}&0\\
        &&0&x_{24}\\
        \overline{x_{13}}&0&&\\
        0&\overline{x_{24}}&&
      \end{pmatrix}\in W_2,
    \]
    alors
    \[
      x_1*x_2 =
      \begin{pmatrix}
        &&&x_{12}x_{24}+x_{13}x_{34}\\
        &&\overline{x_{12}}x_{13}+x_{24}\overline{x_{34}}&\\
        &x_{34}\overline{x_{24}}+\overline{x_{13}}x_{12}&&\\
        \overline{x_{34}}\;\overline{x_{13}} + \overline{x_{24}}\;
        \overline{x_{12}} &&&
      \end{pmatrix}\in W_3.
    \]
    De plus, il ressort de la preuve de la
    proposition~\ref{prop:decorth} que
    \[
      \Srp_\sigma(x_1)=n_Q(x_{12})+n_Q(x_{34}),\qquad
      \Srp_\sigma(x_2)=n_Q(x_{13})+n_Q(x_{24})
    \]
    et
    \[
      \Srp_\sigma(x_1*x_2) = n_Q(x_{12}x_{24}+x_{13}x_{34}) +
      n_Q(\overline{x_{12}}x_{13}+x_{24}\overline{x_{34}}).
    \]
    On développe le second membre de cette dernière équation: en
    notant $t_Q\colon Q\to F$ la trace réduite (qui est la trace de
    $M_2(F)$), 
    \begin{multline*}
      \Srp_\sigma(x_1*x_2) =
      \\
      n_Q(x_{12}x_{24}) + n_Q(x_{13}x_{34}) +
      t_Q(x_{12}x_{24}\overline{x_{34}}\;\overline{x_{13}}) +
      n_Q(\overline{x_{12}}x_{13}) + n_Q(x_{24}\overline{x_{34}}) +
      t_Q(\overline{x_{12}}x_{13}x_{34}\overline{x_{24}}).
    \end{multline*}
    Or, $t_Q(x_{12}x_{24}\overline{x_{34}}\;\overline{x_{13}})=
    t_Q(\overline{x_{12}}x_{13}x_{34}\overline{x_{24}})$. Comme la
    caractéristique est~$2$, l'expression de $\Srp_\sigma(x_1*x_2)$ se
    simplifie:
    \[
      n_Q(x_{12}x_{24}) + n_Q(x_{13}x_{34})+
      n_Q(\overline{x_{12}}x_{13}) + n_Q(x_{24}\overline{x_{34}}) =
      \bigl(n_Q(x_{12})+n_Q(x_{34})\bigr) \cdot
      \bigl(n_Q(x_{13})+n_Q(x_{24})\bigr).
      \qedhere
    \]  
\end{proof}

\begin{coro}
  \label{coro:comp}
  Il existe une $3$-forme quadratique de Pfister~$\pi_3$ et des
  éléments $a_1$, $a_2\in F^\times$ tels que les restrictions de
  $\Srp_\sigma$ à $W_1$, $W_2$ et $W_3$ soient liées par
  \[
    \Srp_\sigma\rvert_{W_1}\simeq \langle a_1\rangle \pi_3,
    \qquad
    \Srp_\sigma\rvert_{W_2}\simeq \langle a_2\rangle \pi_3,
    \qquad
    \Srp_\sigma\rvert_{W_3}\simeq \langle a_1a_2\rangle \pi_3.
  \]
  Pour la forme $\pi_3$ et $a_1$, $a_2$ ci-dessus, la classe de Witt
  de $\Srp_\sigma$ se décompose comme suit: $\Srp_\sigma=[1,1]+\pi_3 +
  \langle1,a_1,a_2,a_1a_2\rangle \pi_3$.
\end{coro}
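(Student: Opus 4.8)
The plan is to establish the corollary in two parts. First I would use the multiplicative structure from Proposition~\ref{prop:compo} to show that the three restrictions $\Srp_\sigma\rvert_{W_i}$ are scalar multiples of a single $3$-fold Pfister form $\pi_3$. The key observation is that the squaring maps $\sq_i\colon W_i\to L_i$ of Proposition~\ref{prop:decorth} realize each restriction as a trace form $-T_i(x^2)$, and since we are in characteristic~$2$ over a split algebra each $\Srp_\sigma\rvert_{W_i}$ is isometric to a scaled norm form $\langle c_i\rangle n_Q \perp \langle c_i'\rangle n_Q$ on the $8$-dimensional space $W_i\cong Q\oplus Q$. The multiplicativity $\Srp_\sigma(x_1*x_2)=\Srp_\sigma(x_1)\Srp_\sigma(x_2)$ forces a tight compatibility: the composition $*$ makes $(\Srp_\sigma\rvert_{W_1}, \Srp_\sigma\rvert_{W_2}, \Srp_\sigma\rvert_{W_3})$ into a \emph{composition of quadratic forms}, and a nonsingular $8$-dimensional quadratic form admitting such a composition with an $8$-dimensional companion must be a scalar multiple of a $3$-fold Pfister form. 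I would invoke the theory of composition of Pfister forms (or argue directly that $n_Q\perp n_Q$ over a quadratic étale algebra is similar to a $3$-fold Pfister form) to produce $\pi_3$ together with scalars $a_1$, $a_2$ with $\Srp_\sigma\rvert_{W_1}\simeq\langle a_1\rangle\pi_3$ and $\Srp_\sigma\rvert_{W_2}\simeq\langle a_2\rangle\pi_3$. The multiplicativity then pins down $\Srp_\sigma\rvert_{W_3}\simeq\langle a_1a_2\rangle\pi_3$: one reads off the scalar from $\Srp_\sigma(x_1*x_2)$ evaluated on product elements and the similarity factors $a_1$, $a_2$ must multiply.

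For the second part I would assemble the Witt class of $\Srp_\sigma$ from the orthogonal decomposition of Proposition~\ref{prop:decorth}. The polar form $b_\sigma$ decomposes $\Symd(\sigma)$ orthogonally as $L\perp W_1\perp W_2\perp W_3$, so in the Witt group $I_qF$ we have $\Srp_\sigma=\Srp_\sigma\rvert_L+\Srp_\sigma\rvert_{W_1}+\Srp_\sigma\rvert_{W_2}+\Srp_\sigma\rvert_{W_3}$. Substituting the three Pfister multiples from the first part gives the $W_1\perp W_2\perp W_3$ contribution as $\langle a_1\rangle\pi_3+\langle a_2\rangle\pi_3+\langle a_1a_2\rangle\pi_3$; adding the trivial summand $\langle 1\rangle\pi_3$ and regrouping, one recognizes $\langle 1,a_1,a_2,a_1a_2\rangle\pi_3$, which is the product Pfister form appearing in the statement. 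The discrepancy between $\langle 1,a_1,a_2,a_1a_2\rangle\pi_3$ and the actual sum $\langle a_1,a_2,a_1a_2\rangle\pi_3$ is exactly one copy of $\pi_3$, so I would track this extra $\pi_3$ carefully and show it is cancelled or supplied by the restriction $\Srp_\sigma\rvert_L$.

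The crux therefore reduces to computing $\Srp_\sigma\rvert_L$ and verifying it equals $[1,1]+\pi_3$ in $I_qF$ (or more precisely that $[1,1]+\pi_3-\langle a_1,a_2,a_1a_2\rangle\pi_3 = \Srp_\sigma\rvert_L - \langle 1,a_1,a_2,a_1a_2\rangle\pi_3 + \langle 1,a_1,a_2,a_1a_2\rangle\pi_3$ collapses correctly). Over the split algebra $L\cong F^4$ sits diagonally in $M_4(Q)$, and $\Srp_\sigma$ restricted to these diagonal matrices is a degree-$2$ form in the four scalar entries $x_1,\dots,x_4$; I would compute it directly from $\Prp_{\sigma,a}$, which for $a=\diag(x_1,x_2,x_3,x_4)$ is $\prod_i(X-x_i)$, so that $\Srp_\sigma\rvert_L$ is the second elementary symmetric polynomial $\sum_{i<j}x_ix_j$. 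The hard part will be handling the descent and separability carefully: this computation is done over a splitting field, and I must check that the resulting form $\sum_{i<j}x_ix_j$ descends to the claimed Witt-class decomposition $[1,1]+\pi_3$ over the original base field $F$, using the Galois action of $G$ on $L$ to relate the symmetric-polynomial form to $\pi_3$ and to the anisotropic binary form $[1,1]=X^2+XY+Y^2$. Since $[1,1]$ is the norm form of the separable quadratic constant extension and $\pi_3$ carries the genuine arithmetic content of the involution, this descent step is where the essential characteristic-$2$ phenomena enter, and it is the step I expect to require the most care.
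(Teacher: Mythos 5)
Your first half is essentially the paper's own argument: Proposition~\ref{prop:compo} exhibits $*$ as a composition of nonsingular $8$-dimensional quadratic forms, and the theory of compositions (\cite[Prop.~2.9]{BT}) produces the common $3$-fold Pfister form $\pi_3$ and the similarity factors $a_1$, $a_2$, $a_1a_2$. The genuine gap is in your Witt-group bookkeeping in the second half. The orthogonal decomposition gives, in $I_qF$, the identity $\Srp_\sigma = \Srp_\sigma\rvert_L + \langle a_1,a_2,a_1a_2\rangle\pi_3$, and the target is $[1,1]+\pi_3+\langle1,a_1,a_2,a_1a_2\rangle\pi_3$. Since $\langle1,a_1,a_2,a_1a_2\rangle\pi_3 = \pi_3\perp\langle a_1,a_2,a_1a_2\rangle\pi_3$ as forms, what is actually required is $\Srp_\sigma\rvert_L = [1,1]+\pi_3+\pi_3$ in $I_qF$; and because the characteristic is $2$ one has $-1=1$, so $q\perp q$ is hyperbolic for every nonsingular $q$ and $I_qF$ is $2$-torsion, so the requirement collapses to $\Srp_\sigma\rvert_L = [1,1]$. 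Your stated target $\Srp_\sigma\rvert_L = [1,1]+\pi_3$ is off by one copy of $\pi_3$: you counted the copy needed to complete $\langle a_1,a_2,a_1a_2\rangle\pi_3$ into $\pi_5$, but not the standalone $\pi_3$ appearing in the statement. Moreover your target is unattainable: $\Srp_\sigma\rvert_L$ is a $4$-dimensional form, whereas a Witt class $[1,1]+\pi_3$ with $\pi_3$ anisotropic has anisotropic dimension at least $6$. Had you carried out your plan, you would have concluded $\Srp_\sigma = [1,1]+\pi_5$, which differs from the correct answer by $\pi_3$. The two extra copies of $\pi_3$ are supplied by $2$-torsion, not by the summand $L$.

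There is a second, methodological, problem: you propose to compute $\Srp_\sigma\rvert_L$ over a splitting field (where it is indeed the second elementary symmetric function of the diagonal entries) and then to recover its class over $F$ by descent. But a Witt class over $F$ is not determined by its image over a splitting field --- over such a field $\pi_3$ itself becomes hyperbolic, so $[1,1]$ and $[1,1]+\pi_3$ are indistinguishable there, and this is precisely the distinction at stake. No descent is needed, because the computation can be carried out rationally over $F$, as the paper does: choose $\ell_i\in L_i$ with $T_i(\ell_i)=1$ for $i=1,2$; then $\Prp_{\sigma,\ell_i}(X)=\bigl(X^2+X+N_i(\ell_i)\bigr)^2$, hence $\Srp_\sigma(\ell_i)=\Srp_\sigma(\ell_i+1)=1$, and the same holds for $\ell_1+\ell_2\in L_3$; these identities yield $b_\sigma(\ell_1,\ell_2)=1$, so $\ell_1$ and $\ell_2$ span a subspace of $L$ on which the form is $[1,1]$, and the orthogonal complement of this subspace in $L$ contains the isotropic vector~$1$. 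Therefore $\Srp_\sigma\rvert_L\simeq[1,1]\perp[0,0]$, whose Witt class is $[1,1]$, exactly what the corrected bookkeeping requires.
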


\begin{proof}
  La proposition~\ref{prop:compo} montre que $*$ est une composition
  de formes quadratiques non singulières de dimension~$8$:
  $(W_1,\Srp_\sigma\rvert_{W_1}) \times
  (W_2,\Srp_\sigma\rvert_{W_2})\to (W_3,\Srp_\sigma\rvert_{W_3})$ au
  sens de~\cite[\S2]{BT}. Les formes
  $\Srp_\sigma\rvert_{W_i}$ sont donc multiples scalaires d'une même
  $3$-forme 
  de Pfister $\pi_3$, d'après~\cite[Prop.~2.9]{BT}. Si $x_1\in W_1$ et
  $x_2\in W_2$ sont des 
  vecteurs non nuls et non
  isotropes, alors en posant $a_i=\Srp_\sigma(x_i)$ pour $i=1$, $2$,
  on a $\Srp_\sigma\rvert_{W_i}\simeq\langle a_i\rangle\pi_3$ pour
  $i=1$, $2$. De plus, $\Srp_\sigma\rvert_{W_3}$ représente $a_1a_2$
  puisque $\Srp_\sigma(x_1*x_2)=\Srp_\sigma(x_1)\Srp_\sigma(x_2)$,
  donc $\Srp_\sigma\rvert_{W_3}\simeq\langle a_1a_2\rangle \pi_3$.
  Vu la proposition~\ref{prop:decorth}, on obtient
  \[
    \Srp_\sigma\simeq \Srp_\sigma\rvert_L \perp \langle a_1\rangle
    \pi_3 \perp \langle a_2\rangle \pi_3 \perp \langle a_1a_2\rangle
    \pi_3,
  \]
  et il ne reste plus qu'à voir que $\Srp_\sigma\rvert_L$ est
  équivalente au sens de Witt à $[1,1]$. Cela résulte du calcul
  général de la \guillemotleft\, seconde trace\,\guillemotright\ des
  algèbres étales dû à Bergé et Martinet~\cite[Th.~3.5]{BM}, mais en
  l'occurrence on peut en donner une preuve explicite comme suit.

  Si $\ell_1\in L_1$ est tel que $T_1(\ell_1)=1$, alors
  \[
    \Prp_{\sigma,\ell_1}(X) = \bigl(X^2+X+N_1(\ell_1)\bigr)^2,
  \]
  donc $\Srp_\sigma(\ell_1)=1=\Srp_\sigma(\ell_1+1)$. De même, si
  $\ell_2\in L_2$ est tel que $T_2(\ell_2)=1$, alors
  \[
    \Srp_\sigma(\ell_2)=\Srp_\sigma(\ell_2+1)=1;
  \]
  de plus, $\ell_1+\ell_2\in L_3$ et $T_3(\ell_1+\ell_2)=1$, donc
  \[
    \Srp_\sigma(\ell_1+\ell_2)=\Srp_\sigma(\ell_1+\ell_2+1)=1.
  \]
  Ces égalités entraînent que
  $\Srp_\sigma(\ell_1+\ell_2)-\Srp_\sigma(\ell_1) -
  \Srp_\sigma(\ell_2)=1$, donc la restriction de $\Srp_\sigma$ au
  sous-espace de $L$ engendré par $\ell_1$ et $\ell_2$ est une forme
  quadratique isométrique à $[1,1]$. L'orthogonal de ce sous-espace
  contient~$1$ car $\Srp_\sigma(1)=0$, $\Srp_\sigma(\ell_1+1) =
  \Srp_\sigma(\ell_1)$ et
  $\Srp_\sigma(\ell_2+1)=\Srp_\sigma(\ell_2)$. Comme~$1$ est isotrope,
  on en déduit
  \[
    \Srp_\sigma\rvert_L\simeq [1,1]\perp [0,0].
    \qedhere
  \]
\end{proof}

\begin{exam}
  \label{exam:ind2}
  Supposons $A=\End_QV$ pour $V$ un espace vectoriel de dimension~$4$
  sur un corps de quaternions $Q$. Alors $\sigma$ est adjointe à une
  forme hermitienne alternée $h$ sur $V$ pour l'involution
  canonique~$\invo$ sur $Q$: voir \cite[(4.2)]{BoI}. On peut supposer
  que $h$ représente~$1$ car $h$ n'est
  déterminée qu'à un facteur scalaire près, et choisir une base de $V$
  par rapport à laquelle 
  $h$ est diagonale, soit $h=\langle1,u_1,u_2,u_3\rangle$ pour
  certains 
  $u_1$, $u_2$, $u_3\in F^\times$. On peut prendre pour $L$ la
  $F$-algèbre déployée engendrée par les projections $p_1$, $p_2$,
  $p_3$, $p_4$ sur les vecteurs de base. Si $\alpha_1$ échange $p_1$
  et $p_2$, ainsi que $p_3$ et $p_4$, alors un calcul semblable à
  celui de la démonstration de la proposition~\ref{prop:decorth}
  montre que dans 
  la représentation matricielle par rapport à la base choisie,
  \[
    W_1=\left\{
      \begin{pmatrix}
        0&u_1x&&\\ \overline x&0&&\\ &&0&u_3y\\ &&u_2
        \overline{y}&0 
      \end{pmatrix}
      \mid x, y\in Q
    \right\}.
  \]
  Dès lors, en désignant par $n_Q$ la forme norme réduite de~$Q$, on
  obtient $\Srp_\sigma\rvert_{W_1}\simeq\langle u_1,u_2u_3\rangle
  n_Q$. De même, si $\alpha_2$ échange $p_1$ et $p_3$ ainsi que $p_2$
  et $p_4$, alors $\Srp_\sigma\rvert_{W_2}\simeq \langle
  u_2,u_1u_3\rangle n_Q$ et $\Srp_\sigma\rvert_{W_3}\simeq \langle
  u_3, u_1u_2\rangle n_Q$. Par conséquent,
  \[
    \pi_3=\langle1,u_1u_2u_3\rangle n_Q \qquad\text{et}\qquad
    \pi_5=\langle1,u_1,u_2,u_3\rangle \pi_3.
  \]
  Ce résultat corrobore le calcul dans~\cite[Prop.~8.6]{BGBT3} de la
  forme de Pfister discriminante $\pi_3$.
\end{exam}

Revenant au cas général, on caractérise les cas où $\pi_3$ est
hyperbolique.

\begin{prop}
  \label{prop:pi3}
  Les conditions suivantes sont équivalentes:
  \begin{enumerate}
  \item[\emph{(i)}]
    $\pi_3$ est hyperbolique;
  \item[\emph{(ii)}]
    $(A,\sigma)\simeq(H_1,\sigma_1)\otimes (H_2,\sigma_2) \otimes
    (H_3,\sigma_3)$ pour certaines algèbres de quaternions à
    involution $(H_i,\sigma_i)$;
  \item[\emph{(iii)}]
    $(A,\sigma)\simeq(Q_1,\invo) \otimes (Q_2,\invo) \otimes
    (Q_3,\invo)$ pour certaines algèbres de quaternions $Q_i$ avec
    leur involution canonique.
  \end{enumerate}
\end{prop}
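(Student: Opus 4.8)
The plan is to establish the cycle of implications (iii)$\Rightarrow$(ii)$\Rightarrow$(i)$\Rightarrow$(iii). The first arrow is immediate: a canonical involution is an involution of the first kind on a quaternion algebra, so (iii) is a special case of (ii). The only point to verify is that the tensor product of the three canonical involutions is again symplectic, matching the hypothesis on $\sigma$; in caractéristique~$2$ this follows from the observation that a tensor product of symmetric bilinear forms is alternée dès qu'un des facteurs l'est, so that a product involving at least one facteur symplectique is symplectique.

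The arrow (ii)$\Rightarrow$(i) is the computational direction, and the device I would use is the following consequence of la proposition~\ref{prop:decorth}. Fixing for each $i$ a generator $\theta_i$ of $L_i$ with $T_i(\theta_i)=1$ and writing the forme de mise au carré $\sq_i=\phi_{i,0}+\phi_{i,1}\theta_i$ with $\phi_{i,0},\phi_{i,1}\colon W_i\to F$, the equality $\Srp_\sigma(x)=-T_i(x^2)$ reduces in caractéristique~$2$ to $\Srp_\sigma\rvert_{W_i}=\phi_{i,1}$. By le corollaire~\ref{coro:comp} one has $\phi_{i,1}\simeq\langle a_i\rangle\pi_3$, so $\pi_3$ is hyperbolique if and only if some (hence every) $\phi_{i,1}$ is isotrope, i.e. if and only if some $W_i$ contains a nonzero $x$ with $x^2\in F$. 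Given a décomposition as in (ii), I would choose $L$ adapted to the facteurs and exhibit such an $x$ explicitly (an element of the form $p\otimes p'\otimes1$ built from éléments symétriques of two facteurs has central square, and lies in some $W_i$ for a suitable $L$), concluding that $\pi_3$ is isotrope, hence hyperbolique, being une forme de Pfister \cite{EKM}. This parallels the calcul de la forme de Pfister discriminante in \cite{BGBT3}.

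The heart of the proof is (i)$\Rightarrow$(iii). Assuming $\pi_3$ hyperbolique, every $\phi_{i,1}$ is hyperbolique, so I can pick $x_1\in W_1$ with $\phi_{1,1}(x_1)=0$ and $\phi_{1,0}(x_1)\neq0$, that is, $x_1^2=\lambda_1\in F^\times$. Together with an element $e\in L$ satisfying $\alpha_1(e)=e+1$ (so that $x_1ex_1^{-1}=\alpha_1(e)=e+1$ by the relation defining $W_1$), this $x_1$ generates a sous-algèbre de quaternions stable sous $\sigma$; passing to its commutant, which carries an involution induite of the first kind, and iterating, one reaches a décomposition of $(A,\sigma)$ into a produit tensoriel of algèbres de quaternions à involution.

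The main obstacle is that $x_1$ and $e$ lie in $\Symd(\sigma)$, hence are fixed by $\sigma$, so the involution induced on this facteur is \emph{orthogonale} rather than canonical; upgrading the décomposition to one with all facteurs carrying their involution canonique (symplectique) is the delicate step, specific to la caractéristique~$2$. The resource to exploit is the strict inclusion $\Symd(\sigma)\subsetneq\Sym(\sigma)$: the trace-zero generators of a sous-algèbre de quaternions with involution canonique lie in $\Sym(\sigma)\setminus\Symd(\sigma)$, and a generator $u$ with $\sigma(u)=u+1$ plays the role of the générateur d'Artin--Schreier. I would therefore either refine the construction so as to locate the generators in $\Sym(\sigma)$ with the correct $\sigma$-action, or prove a lemme de normalisation asserting that in caractéristique~$2$ une involution symplectique totalement décomposable de degré~$8$ admits a décomposition with all facteurs canoniques, thereby promoting (ii) to (iii). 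Keeping track, through the récurrence sur les commutants, that the three sous-algèbres de quaternions pairwise commute and jointly generate $A$ is what makes the décomposition a genuine produit tensoriel.
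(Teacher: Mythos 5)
Your proposal has the right overall architecture, but it stops short of proving the step that carries the real content of the proposition: passing from a decomposition whose factors may carry orthogonal involutions to one where every factor carries its involution canonique. In your (i)$\Rightarrow$(iii), the element $x_1$ and the Artin--Schreier element $e$ both lie in $\Symd(\sigma)$, so the quaternion subalgebra they generate inherits an involution \emph{orthogonale} — you say this yourself, call the upgrade to canonical factors \guillemotleft\,the delicate step\,\guillemotright, and then propose to handle it by \guillemotleft\,either refining the construction or proving a lemme de normalisation\,\guillemotright\ without doing either. That normalization lemma is precisely the paper's implication (ii)$\Rightarrow$(iii), and the paper proves it by an explicit change of generators: if $\sigma_1=\invo$ on $H_1$ and $\sigma_2$ is orthogonal on $H_2$, adjoint to a trace-zero unit $j_2$ (so $j_2^2\in F^\times$), one chooses $i_2$ with $j_2i_2j_2^{-1}=i_2+1$ and $i_1$, $j_1\in H_1$ analogously, and checks that $i_1$ and $j_1j_2$ generate a quaternion algebra $Q_1$, that $i_1+i_2$ and $j_2$ generate $Q_2$, and that $(H_1,\sigma_1)\otimes(H_2,\sigma_2)=(Q_1,\invo)\otimes(Q_2,\invo)$. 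Without this argument (or an equivalent one), your cycle establishes at best the equivalence of (i) and (ii); the equivalence with (iii) is exactly what is missing.

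Two secondary points are also thinner than what a complete proof requires. First, in (i)$\Rightarrow$(iii) you \guillemotleft\,pick $x_1\in W_1$ with $\phi_{1,1}(x_1)=0$ and $\phi_{1,0}(x_1)\neq0$\,\guillemotright: isotropy of $\Srp_\sigma\rvert_{W_1}$ only yields $x\neq0$ with $x^2\in F$, and the case $x^2=0$ can occur; the paper needs a separate argument, using the non-singularity of the squaring form $\sq_1$ from la proposition~\ref{prop:decorth}, to replace $x$ by $x\lambda+y$ of square~$1$. Second, \guillemotleft\,iterating\,\guillemotright\ on the commutant is not available as stated: the commutant has degree~$4$, where the machinery of $L$, $W_i$ and $\pi_3$ does not apply; what is needed (and what the paper cites) is the decomposability of symplectic involutions in degree~$4$, via \cite[(16.16)]{BoI} in the non-hyperbolic case and \cite[Prop.~5.3]{BGBT3} in the hyperbolic case. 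Similarly, your (ii)$\Rightarrow$(i) requires the element $p\otimes p'\otimes1$ to lie in $\Symd(\sigma)$ (not merely $\Sym(\sigma)$) and in some $W_i$ for a biquadratic $L\subset\Symd(\sigma)$ adapted to the given factors; the paper sidesteps this by proving (iii)$\Rightarrow$(i) instead, where the canonical generators furnish $L=F(i_1+i_2,\,i_2+i_3,\,i_3+i_1)$ and $j_1\in W_1$ directly.
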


\begin{proof}
  (i)~$\Rightarrow$~(ii) On choisit dans $\Symd(\sigma)$ une
  $F$-algèbre biquadratique $L$ comme dans la
  section~\ref{sec:orth}. Sous 
  l'hypothèse~(i), la forme $\Srp_\sigma\rvert_{W_1}$ est isotrope,
  donc on peut trouver $x\in W_1$ non nul tel que $T_1(x^2)=0$, ce qui
  revient à dire que $x^2\in F$. Si $x^2=0$, alors la forme
  quadratique $\sq_1\colon W_1\to L_1$ d'élévation au carré est
  isotrope; mais la proposition~\ref{prop:decorth} montre que cette
  forme est non singulière, donc on peut trouver $y\in W_1$ tel que
  $xy+yx\in L_1^\times$. Alors pour $\lambda=(y^2+1)(xy+yx)^{-1}\in
  L_1$ on a $(x\lambda+y)^2=1$. Quitte à changer $x$, on peut donc
  toujours supposer $x^2\in 
  F^\times$. Comme la conjugaison par $x$ induit sur $L_2$
  l'automorphisme non trivial, $x$ et $L_2$ engendrent une algèbre de
  quaternions $H_1$ stable sous $\sigma$. La restriction $\sigma_1$ de
  $\sigma$ à $H_1$ est orthogonale: en effet, si $\ell\in L_2$
  satisfait $T_2(\ell)=1$, alors $x\ell+\ell x=x\ell +
  \sigma_1(x\ell)=x$, ce qui montre que $x$ est dans
  $\Symd(\sigma_1)$, donc $\Symd(\sigma_1)\neq F$. Le centralisateur
  de $H_1$ dans $A$ est une  
  algèbre simple centrale de degré~$4$ sur laquelle $\sigma$ se
  restreint en une involution symplectique,
  d'après~\cite[(2.23)]{BoI}. Pour établir~(ii), il suffit maintenant
  d'observer que ce centralisateur se décompose en produit de deux
  algèbres de quaternions stables sous l'involution; cela résulte
  de~\cite[(16.16)]{BoI} si la restriction de $\sigma$ n'est pas
  hyperbolique, et de~\cite[Prop.~5.3]{BGBT3} si elle l'est.

  (ii)~$\Rightarrow$~(iii) Sous l'hypothèse~(ii), l'une au moins des
  involutions $\sigma_1$, $\sigma_2$, $\sigma_3$ doit être
  symplectique, vu \cite[(2.23)]{BoI}. Disons $\sigma_1=\invo$ et
  supposons que $\sigma_2$ soit
  orthogonale. D'après~\cite[(2.7)]{BoI}, il existe un élément
  inversible non central $j_2\in H_2$ tel que $\overline{j_2}=j_2$ et
  $\sigma_2(h)=j_2\overline h j_2^{-1}$ pour tout $h\in H_2$. Ces
  conditions entraînent $j_2^2\in F^\times$, $\sigma_2(j_2)=j_2$, et
  pour $i_2\in H_2$ tel que $j_2i_2j_2^{-1}=i_2+1$, on a
  $\sigma_2(i_2)=i_2$. Choisissons encore $i_1$, $j_1\in H_1$ tels que
  $j_1^2\in F^\times$ et $j_1i_1j_1^{-1}=i_1+1$, d'où
  $\sigma_1(i_1)=i_1+1$ puisque $\sigma_1$ est la conjugaison
  quaternionienne. 
  Alors $i_1$ et $j_1j_2$ (resp.\
  $i_1+i_2$ et 
  $j_2$) engendrent une algèbre de quaternions $Q_1$ (resp.\ $Q_2$) et
  $(H_1,\sigma_1)\otimes(H_2,\sigma_2) = (Q_1,\invo)\otimes
  (Q_2,\invo)$. Si $\sigma_3$ est orthogonale, on répète l'argument
  pour trouver une nouvelle décomposition de $(A,\sigma)$ en produit
  d'algèbres de quaternions sur chacune desquelles la restriction de
  $\sigma$ est l'involution canonique.

  (iii)~$\Rightarrow$~(i) Pour $k=1$, $2$, $3$, soient $i_k$, $j_k\in
  Q_k$ tels que $j_k^2\in F^\times$ et $j_ki_kj_k^{-1}=i_k+1$. Alors
  $L=F(i_1+i_2, i_2+i_3, i_3+i_1)$ est une algèbre biquadratique comme
  dans la section~\ref{sec:orth}. Si $\alpha_1$ est l'automorphisme
  non trivial de $L$ qui fixe $i_2+i_3$, alors $j_1\in W_1$ et
  $T_1(j_1^2)=0$, donc $\Srp_\sigma\rvert_{W_1}$ est isotrope, et il
  en est de même de $\pi_3$. Comme $\pi_3$ est une forme de Pfister,
  elle est hyperbolique.
\end{proof}

Comme toute involution symplectique hyperbolique est décomposable
(voir~\cite[Prop.~6.1]{BGBT3}), on tire immédiatement de la
proposition précédente:

\begin{coro}
  \label{coro:pi3hyp}
  Si $(A,\sigma)$ est hyperbolique, alors $\pi_3$ est hyperbolique.
\end{coro}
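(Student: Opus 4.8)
The plan is to deduce this immediately from Proposition~\ref{prop:pi3}, using the cited fact as the only new ingredient. The bridge is the notion of \emph{decomposability}: the reference \cite[Prop.~6.1]{BGBT3} asserts that a hyperbolic symplectic involution is decomposable, meaning that $(A,\sigma)$ is isomorphic to a tensor product of quaternion algebras each stable under $\sigma$. Since $\deg A=8=2^3$, any such decomposition must involve exactly three quaternion factors, so under the hypothesis one obtains $(A,\sigma)\simeq(H_1,\sigma_1)\otimes(H_2,\sigma_2)\otimes(H_3,\sigma_3)$ for suitable quaternion algebras with involution.

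This decomposition is precisely condition~(ii) of Proposition~\ref{prop:pi3}. Invoking the equivalence (i)~$\Leftrightarrow$~(ii) proved there, I conclude that $\pi_3$ is hyperbolic, which is exactly the assertion of the corollary. The argument is therefore a one-line chain: hyperbolic $\Rightarrow$ decomposable (degree~$8$ forcing three quaternion factors) $\Rightarrow$ condition~(ii) $\Rightarrow$ $\pi_3$ hyperbolic.

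The only point to verify — and the closest thing to an obstacle — is that the notion of decomposability supplied by the external reference matches condition~(ii) on the nose: both require a tensor decomposition into quaternion subalgebras stable under the involution, and the degree constraint $\deg A=8$ pins down the number of factors at three. Once this identification is made, no computation is required. One could in principle attempt a more direct route, exhibiting an isotropic vector of $\Srp_\sigma\rvert_{W_1}$ from the hyperbolic structure and then appealing to the fact that an isotropic Pfister form is hyperbolic (as in the proof of (iii)~$\Rightarrow$~(i) in Proposition~\ref{prop:pi3}); but this is less efficient than simply quoting decomposability, so I would present the corollary as the immediate consequence of the preceding proposition.
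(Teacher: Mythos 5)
Your proposal is correct and is precisely the paper's argument: the author introduces the corollary with the remark that every hyperbolic symplectic involution is decomposable (citing \cite[Prop.~6.1]{BGBT3}), so that condition~(ii) of la proposition~\ref{prop:pi3} holds and the equivalence (i)~$\Leftrightarrow$~(ii) gives the conclusion. Your additional observation that $\deg A=8$ forces exactly three quaternion factors is a sensible sanity check but adds nothing beyond what the paper leaves implicit.
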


On caractérise enfin les cas où $\pi_5$ est hyperbolique, en
commençant par l'observation suivante:

\begin{lemm}
  \label{lemm:pi5isot}
  Si $(A,\sigma)$ est isotrope, alors $\pi_5$ est hyperbolique.
\end{lemm}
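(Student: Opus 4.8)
Le plan est d'exploiter le fait que $\pi_5$ est une forme de Pfister quadratique: il suffira donc d'en exhiber un vecteur isotrope, puisqu'une forme de Pfister isotrope est hyperbolique. On raisonnera suivant l'indice de $A$, qui est une puissance de~$2$ divisant~$8$. L'indice~$8$ sera écarté d'emblée: $A$ serait alors une algèbre à division, dépourvue de diviseurs de zéro, donc sans involution isotrope, contrairement à l'hypothèse.

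Pour l'indice~$4$, on écrira $A=\End_D V$ avec $D$ une algèbre à division de degré~$4$ et $\dim_D V=2$, de sorte que $\sigma$ est adjointe à une forme hermitienne (ou antihermitienne) $h$ de rang~$2$. L'isotropie de $\sigma$ équivalant à celle de $h$, et une telle forme de rang~$2$ isotrope étant hyperbolique, $(A,\sigma)$ sera hyperbolique; le corollaire~\ref{coro:pi3hyp} donnera alors que $\pi_3$, et par suite $\pi_5=\langle1,a_1,a_2,a_1a_2\rangle\pi_3$, est hyperbolique.

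La partie essentielle de la preuve sera le cas où l'indice de $A$ vaut au plus~$2$: on a alors $A=\End_Q V$ pour une algèbre de quaternions $Q$, avec $\dim_Q V=4$, et $\sigma$ est adjointe à une forme hermitienne alternée $h$ comme dans l'exemple~\ref{exam:ind2}. L'isotropie de $\sigma$ équivaut à celle de $h$; comme une forme isotrope est universelle, $h$ représente~$1$ et l'on pourra la diagonaliser sous la forme $h\simeq\langle1,u_1,u_2,u_3\rangle$ avec $u_1$, $u_2$, $u_3\in F^\times$ (l'isotropie se traduisant par une relation entre les $u_i$ et $n_Q$, et non par l'annulation d'un coefficient). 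L'exemple~\ref{exam:ind2} livre alors $\pi_3=\langle1,u_1u_2u_3\rangle n_Q$ et $\pi_5=\langle1,u_1,u_2,u_3\rangle\langle1,u_1u_2u_3\rangle n_Q$.

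Il restera à observer que la forme bilinéaire $\langle1,u_1,u_2,u_3\rangle$ est une sous-forme de $\langle1,u_1,u_2,u_3\rangle\langle1,u_1u_2u_3\rangle$, si bien que la forme quadratique $q_h=\langle1,u_1,u_2,u_3\rangle n_Q$ est une sous-forme de $\pi_5$. Or $q_h$ n'est autre que la forme $x\mapsto h(x,x)=n_Q(x_0)+u_1n_Q(x_1)+u_2n_Q(x_2)+u_3n_Q(x_3)$ sur le $F$-espace $V\cong F^{16}$, et l'isotropie de $h$ équivaut à celle de $q_h$. Cette sous-forme de $\pi_5$ étant isotrope, $\pi_5$ le sera aussi, donc $\pi_5$ sera hyperbolique. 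L'obstacle principal sera d'identifier proprement $q_h$ comme sous-forme de $\pi_5$ et de relier l'isotropie de l'involution à celle de $q_h$; une fois ce lien acquis, la conclusion dans le cas d'indice au plus~$2$ est immédiate, les indices~$4$ et~$8$ ayant été traités à part.
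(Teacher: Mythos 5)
Votre démonstration est correcte et suit pour l'essentiel la même voie que celle de l'article: une disjonction de cas selon l'indice de $A$, le cas d'indice~$4$ (co-indice~$2$) étant réglé par l'hyperbolicité de $(A,\sigma)$ jointe au corollaire~\ref{coro:pi3hyp}, et le cas crucial d'indice~$2$ par l'exemple~\ref{exam:ind2}, en identifiant $\langle1,u_1,u_2,u_3\rangle n_Q$ à la forme diagonale $h(X,X)$ et en l'exhibant comme sous-forme isotrope de $\pi_5$. La seule imprécision est que vous englobez le cas déployé (indice~$1$) dans l'analyse $A=\End_QV$: or l'exemple~\ref{exam:ind2} est énoncé pour $Q$ \emph{un corps de quaternions}, de sorte que la diagonalisation $h\simeq\langle1,u_1,u_2,u_3\rangle$ d'une forme hermitienne alternée sur l'algèbre de quaternions \emph{déployée} demanderait une justification séparée (facile, mais que vous n'écrivez pas). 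L'article évite ce point en remarquant que si $A$ est déployée, $\sigma$ est adjointe à une forme bilinéaire alternée non dégénérée, donc $(A,\sigma)$ est automatiquement hyperbolique: l'indice~$1$ se traite alors exactement comme votre cas d'indice~$4$, via le corollaire~\ref{coro:pi3hyp}; moyennant cet ajustement d'une ligne, votre argument coïncide avec celui de l'article.
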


\begin{proof}
  Si $(A,\sigma)$ est isotrope, alors $A$ n'est pas à division. Si son
  indice est~$1$ ou son co-indice est~$2$, alors
  $(A,\sigma)$ est hyperbolique, et le corollaire~\ref{coro:pi3hyp}
  montre que $\pi_3$ est hyperbolique. Il en est donc de même de
  $\pi_5$, puisque $\pi_5$ est multiple de $\pi_3$. Il ne reste donc
  qu'à étudier le cas où l'indice de $A$ est~$2$. On est alors dans la
  situation de l'exemple~\ref{exam:ind2}, où l'on a vu que $\pi_5$ est
  un multiple de $\langle1,u_1,u_2,u_3\rangle n_Q$. Or, cette dernière
  forme quadratique est la forme diagonale $h(X,X)$ pour la forme
  hermitienne 
  $h=\langle1,u_1,u_2,u_3\rangle$ à laquelle $\sigma$ est
  adjointe. Puisque $(A,\sigma)$ est isotrope, 
  la forme $h$ est isotrope, donc aussi sa forme diagonale, ce qui
  entraîne que $\pi_5$ est isotrope, donc hyperbolique puisque c'est
  une forme de Pfister.
\end{proof}

Remarquons que les réciproques du corollaire~\ref{coro:pi3hyp} et du
lemme~\ref{lemm:pi5isot} ne valent pas: si les 
conditions de la proposition~\ref{prop:pi3} sont satisfaites, alors
$\pi_3$ est hyperbolique, donc $\pi_5$ l'est aussi, puisque $\pi_5$
est multiple de $\pi_3$; mais l'algèbre $A$ peut être à division.

\begin{prop}
  \label{prop:pi5}
  La forme $\pi_5$ est hyperbolique si et seulement si $\Symd(\sigma)$
  contient un élément $x\notin F$ tel que $x^2\in F$.
\end{prop}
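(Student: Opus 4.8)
Voici l'approche que je suivrais. Comme $\pi_5$ est une forme de Pfister quadratique, elle est hyperbolique si et seulement si elle est isotrope, et toute la preuve consiste à relier cette isotropie à l'existence de l'élément cherché. On exploitera la décomposition $\Symd(\sigma)=L\oplus W_1\oplus W_2\oplus W_3$ de la proposition~\ref{prop:decorth}, avec les règles de multiplication $L\,W_i\subseteq W_i$, $w_i^2\in L_i$ pour $w_i\in W_i$, et $w_iw_j+w_jw_i\in W_k$ pour $\{i,j,k\}=\{1,2,3\}$ (proposition~\ref{prop:compo}), ainsi que $w_i\ell=\alpha_i(\ell)w_i$ pour $\ell\in L$.

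Pour l'implication «\,s'il existe un tel $x$, alors $\pi_5$ est hyperbolique\,», j'écrirais $x=\ell_0+w_1+w_2+w_3$ et je calculerais les composantes de $x^2\in\Symd(\sigma)$:
\[ (x^2)_L=\ell_0^2+w_1^2+w_2^2+w_3^2,\qquad (x^2)_{W_i}=\bigl(\ell_0+\alpha_i(\ell_0)\bigr)w_i+(w_jw_k+w_kw_j). \]
La condition $x^2\in F$ force l'annulation de chaque $(x^2)_{W_i}$ et donne $\ell_0^2+\sum_i w_i^2\in F$. En appliquant l'opérateur $\beta_i=\mathrm{id}+\alpha_i$ à cette dernière relation et en posant $b_i=\beta_i(\ell_0)$, $s_i=\Srp_\sigma(w_i)=T_i(w_i^2)$, on obtient $s_j+s_k=b_i^2$, où $b_i\in F$ (car $b_i^2\in F$ et $L$ est étale). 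Or $\Srp_\sigma\rvert_{W_i}\simeq\langle a_i\rangle\pi_3$ est un bloc de $\pi_5=\pi_3\perp a_1\pi_3\perp a_2\pi_3\perp a_1a_2\pi_3$, et $\pi_3$ représente le carré $b_i^2$; le vecteur de $\pi_5$ ayant $b_iv$ (avec $\pi_3(v)=1$) dans le bloc $\pi_3$, $w_j$ et $w_k$ dans les blocs $a_j\pi_3$ et $a_k\pi_3$, et $0$ dans le bloc $a_i\pi_3$, a pour valeur $b_i^2+s_j+s_k=0$. Si $\pi_5$ était anisotrope ce vecteur serait nul pour chaque $i$, d'où $w_1=w_2=w_3=0$ et $x=\ell_0\in L$; mais $L$ étant étale, $x^2\in F$ entraînerait $x\in F$, contradiction. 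Donc $\pi_5$ est isotrope, donc hyperbolique.

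Pour la réciproque, je partirais de l'observation que $\Srp_\sigma\rvert_{W_1\perp W_2\perp W_3}\simeq a_1\pi_3\perp a_2\pi_3\perp a_1a_2\pi_3$ est une sous-forme non singulière de codimension~$8$ de la forme hyperbolique $\pi_5$ (de dimension~$32$, d'indice de Witt~$16$); son indice de Witt est donc $\geq 8$, et elle contient un sous-espace totalement isotrope $U$ de dimension $\geq 8$. Pour $y=w_1+w_2+w_3\in U$ non nul on a $\Trp_\sigma(y)=0$ et $\Srp_\sigma(y)=s_1+s_2+s_3=0$, de sorte que
\[ \Prp_{\sigma,y}(X)=X^4-\Trp^{(3)}_\sigma(y)\,X+\Nrp_\sigma(y), \]
où, par les relations de Newton et la cyclicité de $\Trd_A$ (qui identifient les six termes $\Trd_A(w_{\pi(1)}w_{\pi(2)}w_{\pi(3)})$), on trouve $\Trp^{(3)}_\sigma(y)=\Trd_A(w_1w_2w_3)$. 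Si l'on sait choisir $y\in U$ avec $\Trd_A(w_1w_2w_3)=0$, alors $y^4=\Nrp_\sigma(y)\in F$; comme $y^2\in\Symd(\sigma)$ (ses composantes sont dans $L$ et les $W_i$, par les propositions~\ref{prop:decorth} et~\ref{prop:compo}), l'élément $x=y^2$ convient (ou $x=y$ si $y^2\in F$, $y$ n'étant pas central).

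Le point délicat, que j'identifie comme l'obstacle principal, est précisément d'obtenir l'annulation de la forme cubique $y\mapsto\Trd_A(w_1w_2w_3)$ sur $U$: elle n'est pas automatique, car $\Prp_{\sigma,y}$ n'est pas paire en général (on vérifie sur un exemple, avec des entrées quaternioniennes non scalaires, que $\Trd_A(w_1w_2w_3)$ peut être impair), et le choix naïf $x=y^2$ échoue alors. Lorsque l'un des $a_i$ appartient à $D(\pi_3)$ tout est simple: on peut prendre $U$ dans la somme de deux des $W_j$, ce qui annule identiquement cette cubique, ou même exhiber directement $x=w_i+w_i^2$ avec $w_i\in W_i$ tel que $T_i(w_i^2)\in F^{\times2}$. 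Le cas résistant est celui où aucun $a_i$ n'appartient à $D(\pi_3)$ bien que $\pi_5$ soit hyperbolique; $U$ doit alors rencontrer les trois $W_i$, et je m'attends à ce qu'il faille utiliser la structure multiplicative $*$ de la proposition~\ref{prop:compo} pour sélectionner un $y$ annulant cette forme trilinéaire, ce qui constitue le cœur technique de la démonstration.
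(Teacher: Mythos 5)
Votre preuve de l'implication \guillemotleft\,existence de $x$ $\Rightarrow$ $\pi_5$ hyperbolique\,\guillemotright\ est correcte, et elle emprunte une route réellement différente de celle de l'article. Là où l'article introduit le corps $K=F(x)$, son centralisateur $B$, une décomposition $(B,\sigma_B)=(Q_1,\sigma_1)\otimes_K(Q_2,\invo)$, puis le plongement d'une algèbre quadratique étale \guillemotleft\,nette\,\guillemotright\ dans une algèbre biquadratique via \cite[Th.~6.10]{BGBT1} --- sans compter le cas $x^2\in F^{\times2}$, traité à part par le lemme~\ref{lemm:pi5isot} ---, vous décomposez directement $x=\ell_0+w_1+w_2+w_3$ relativement à une algèbre $L$ quelconque de la section~\ref{sec:orth}. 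Vos règles de calcul ($LW_i\subseteq W_i$, $w_i^2\in L_i$, $w_jw_k+w_kw_j\in W_i$) découlent bien des propositions~\ref{prop:decorth} et \ref{prop:compo}, la relation $b_i^2+s_j+s_k=0$ avec $b_i=\ell_0+\alpha_i(\ell_0)\in F$ est exacte, et l'argument final est concluant: l'anisotropie de $\pi_5$ forcerait $w_1=w_2=w_3=0$ et $b_1=b_2=b_3=0$, donc $x=\ell_0\in L^G=F$, contradiction. Cette moitié de votre preuve est plus élémentaire et plus uniforme que celle de l'article, qui repose ici sur \cite[Prop.~4.5]{RST}, \cite[(16.16)]{BoI}, \cite[Prop.~5.3]{BGBT3} et \cite[Th.~6.10]{BGBT1}.

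En revanche, pour l'implication réciproque ($\pi_5$ hyperbolique $\Rightarrow$ existence de $x$), la lacune que vous signalez vous-même est réelle et votre stratégie, en l'état, n'aboutit pas: une forme cubique en huit variables sur un corps quelconque n'a aucune raison de posséder un zéro non trivial, et vous ne disposez d'aucun mécanisme pour annuler le coefficient cubique $U(y)$ sur votre sous-espace totalement isotrope $U\subseteq W_1\oplus W_2\oplus W_3$. Le point que vous avez manqué est que l'article n'annule jamais cette forme sur un vecteur à trois composantes: tous les éléments qu'il manipule vivent dans la somme de \emph{deux} blocs, où l'annulation est automatique. En effet, pour $y\in W_1\oplus W_2$ non nul et $\ell\in L_3$ de trace~$1$, on a $y\ell+\ell y=y$ et $y^2\ell=\ell y^2$, d'où $U(y)y=\Prp_{\sigma,y}(y)\ell+\ell\Prp_{\sigma,y}(y)=0$ et donc $U(y)=0$; c'est votre remarque sur les sommes de deux blocs, mais elle vaut sans aucune hypothèse sur les $a_i$.

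La clé de la preuve de l'article est alors un \emph{changement d'algèbre biquadratique}, et non un choix de vecteur dans un sous-espace isotrope fixé. Comme $\langle1\rangle\perp\Srp_\sigma\rvert_{W_1}\perp\Srp_\sigma\rvert_{W_2}$ est une sous-forme de $\pi_5$ de dimension $17>16$, l'hyperbolicité fournit $y\in W_1\oplus W_2$ avec $\Srp_\sigma(y)=1$ --- et non $0$! Cette valeur non nulle interdit $y^2\in F$ (sinon $\Prp_{\sigma,y}(X)=(X^2-\lambda)^2$ aurait un coefficient nul en $X^2$), et $y^2\notin L_3$; donc $L'=L_3[y^2]$ est une nouvelle algèbre biquadratique étale maximale dans $\Symd(\sigma)$, qui satisfait les conditions voulues par \cite[Prop.~5.6]{BGBT1}, et relativement à laquelle $y\in W'_1$. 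C'est ici que la structure $*$ intervient, comme vous l'aviez pressenti, mais pas pour annuler une forme trilinéaire: pour $w'_2\in W'_2$ non nul, l'élément $z=w'_2+y*w'_2$ appartient à $W'_2\oplus W'_3$ --- deux blocs de $L'$, donc $\Trp_\sigma(z)=U(z)=0$ --- et $\Srp_\sigma(z)=\Srp_\sigma(w'_2)+\Srp_\sigma(y)\Srp_\sigma(w'_2)=0$ par la proposition~\ref{prop:compo} et l'orthogonalité de $W'_2$ et $W'_3$. D'où $z^4=\Nrp_\sigma(z)\in F$, et $x=z$ ou $x=z^2$ convient. La leçon à retenir: on adapte l'algèbre biquadratique à l'élément $y$ pour fabriquer un vecteur isotrope porté par deux blocs seulement, au lieu de chercher un bon vecteur à trois composantes.
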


\begin{proof}
  Supposons d'abord $\pi_5$ hyperbolique, et choisissons une
  $F$-algèbre biquadratique $L\subset\Symd(\sigma)$ comme dans la
  section~\ref{sec:orth} pour représenter
  $\pi_5=\langle1,a_1,a_2,a_1a_2\rangle\pi_3$ suivant le
  corollaire~\ref{coro:comp}. Comme $\pi_5$ contient la forme
  $\langle1\rangle\perp 
  \Srp_\sigma\rvert_{W_1}\perp \Srp_\sigma\rvert_{W_2}$, dont la
  dimension dépasse la moitié de celle de $\pi_5$, il existe $y\in
  W_1\oplus W_2$ tel que $\Srp_\sigma(y)=1$. Vu la
  proposition~\ref{prop:decorth}, on a $\Trp_\sigma(y)=0$, donc
  $\Prp_{\sigma,y}(X) = X^4+X^2+U(y)X+\Nrp_\sigma(y)$ pour une
  certaine forme cubique $U\colon \Symd(\sigma)\to F$. Si $\ell\in
  L_3$ est tel que $T_3(\ell)=1$, alors $y\ell+\ell y=y$ et
  $y^2\ell=\ell y^2$. En utilisant ces relations, on voit que
  $U(y)y=\Prp_{\sigma,y}(y)\ell + \ell\Prp_{\sigma,y}(y)$, donc
  $U(y)y=0$ et par conséquent $y^4+y^2=\Nrp_\sigma(y)\in F$.

  Si $y^2\in F$, disons $y^2=\lambda\in F$, alors le polynôme minimal
  de $y$ sur $F$ est $X^2-\lambda$, donc $\Prp_{\sigma,y}(X) =
  (X^2-\lambda)^2$, ce qui est incompatible avec
  $\Srp_\sigma(y)=1$. Par conséquent, $y^2\notin F$. De plus,
  $y^2\notin L_3$ puisque $y$ commute avec 
  $y^2$ mais ne centralise pas $L_3$. Dès lors, $L_3[y^2] =
  L_3\otimes_FF[y^2]$ est une $F$-algèbre biquadratique étale contenue
  dans $\Symd(\sigma)$. Vu sa dimension, c'est une $F$-algèbre étale
  maximale dans $\Symd(\sigma)$, donc par~\cite[Prop.~5.6]{BGBT1} elle
  satisfait 
  les conditions énoncées au début de la section~\ref{sec:orth}. On la
  note $L'$ et on s'autorise à utiliser en référence à $L'$ les mêmes
  structures que celles définies pour $L$, en affectant leur notation
  d'un $'$ pour les distinguer. Si la numérotation des éléments du
  groupe de Galois de $L'$ est telle que $F[y^2]=L'_1$, alors $y\in
  W'_1$. Pour tout $w'_2\in W'_2$ on a $y*w'_2\in W'_3$ et
  $\Srp_\sigma(y*w'_2)=\Srp_\sigma(w'_2)$ par la
  proposition~\ref{prop:compo}. Choisissant $w'_2\neq0$, on pose
  $z=w'_2+(y*w'_2)$. Alors $\Srp_\sigma(z) =
  \Srp_\sigma(w'_2)+\Srp_\sigma(y*w'_2)=0$ puisque $W'_2$ et $W'_3$
  sont orthogonaux. Les mêmes arguments que pour $y$ donnent
  $\Trp_\sigma(z)=U(z)=0$, donc $z^4=\Nrp_\sigma(z)\in F$. Si $z^2\in
  F$, on choisit $x=z$; si $z^2\notin F$, on prend $x=z^2$. Dans un
  cas comme dans l'autre, on a $x\notin F$ et $x^2\in F$.

  Réciproquement, supposons donné $x\in\Symd(\sigma)\setminus F$ tel
  que $x^2\in F$. Si $x^2=\lambda^2$ pour un $\lambda\in F$, alors
  $\sigma(x-\lambda)\cdot(x-\lambda) = (x-\lambda)^2=0$, donc
  $(A,\sigma)$ est isotrope. Dans ce cas, le lemme~\ref{lemm:pi5isot}
  montre que $\pi_5$ est hyperbolique. Pour la suite du raisonnement,
  on peut donc supposer que $F(x)$ est un corps, extension quadratique
  purement 
  inséparable de $F$. On note $K=F(x)$ et $B$ le
  centralisateur de $K$ dans $A$, qui est une algèbre simple centrale
  de degré~$4$ sur $K$. Comme $K\subset\Symd(\sigma)$, la
  restriction $\sigma_B$ de $\sigma$ à $B$ est une involution
  symplectique, vu~\cite[Prop.~4.5]{RST}. Par \cite[(16.16)]{BoI} si
  $\sigma_B$ n'est pas hyperbolique et par \cite[Prop.~5.3]{BGBT3} si
  $\sigma_B$ est hyperbolique, on
  peut donc décomposer:
  \[
    (B,\sigma_B) = (Q_1,\sigma_1)\otimes_K(Q_2,\invo),
  \]
  où $Q_1$ et $Q_2$ sont des algèbres de quaternions sur $K$ et
  $\sigma_1$ est une involution orthogonale sur $Q_1$. On peut alors
  trouver dans $Q_1$ des éléments $\sigma_1$-symétriques $i_1$, $j_1$
  tels que $i_1^2+i_1\in K$, $j_1^2\in K^\times$ et
  $j_1i_1j_1^{-1}=i_1+1$.  Quitte à remplacer $i_1$ par $i_1^2$, on
  peut supposer $i_1^2+i_1\in F$. Alors $F[i_1]$ est une $F$-algèbre
  quadratique étale; elle est contenue dans $\Symd(\sigma)$ car si
  $i_2\in Q_2$ satisfait $\overline{i_2}=i_2+1$, alors
  $i_1=i_1i_2+\sigma(i_1i_2)$. De plus, $A$ est un module à droite (ou
  à gauche) libre sur $F[i_1]$, car l'automorphisme intérieur induit
  par $j_1$ se restreint en l'automorphisme non trivial de $F[i_1]$
  sur $F$. Cela montre que $F[i_1]$ est une sous-algèbre
  \guillemotleft\,nette\,\guillemotright\ de $(A,\sigma)$, dans la
  terminologie de~\cite[\S5]{BGBT3}. Dès lors,
  par~\cite[Th.~6.10]{BGBT1}, on peut plonger 
  $F[i_1]$ dans une $F$-algèbre biquadratique $L$ comme dans la
  section~\ref{sec:orth}, de sorte que $F[i_1]$ soit la sous-algèbre
  fixe sous un élément (disons $\alpha_1$) du groupe de Galois. On a
  aussi $j_1\in\Symd(\sigma)$ car $j_1=j_1i_2+\sigma(j_1i_2)$, et
  comme $j_1i_1=(i_1+1)j_1$ on doit avoir $j_1\in W_2\oplus W_3$, avec
  les notations de la section~\ref{sec:orth}. Comme pour les éléments
  $y$ et $z$ de la première partie de la preuve, on tire
  $j_1^4+\Srp_\sigma(j_1)j_1^2+\Nrp_\sigma(j_1)=0$. Mais $j_1^4\in
  F^\times$ puisque $j_1^2\in K^\times$, donc
  $\Srp_\sigma(j_1)j_1^2\in F$. Si $j_1^2\notin F$, on en déduit
  $\Srp_\sigma(j_1)=0$; mais si $j_1^2\in F$, soit $j_1^2=\lambda\in
  F$, alors $\Prp_{\sigma, j_1}(X)=(X^2-\lambda)^2$ donc on a aussi
  $\Srp_\sigma(j_1)=0$. La restriction de $\Srp_\sigma$ à $W_2\oplus
  W_3$ est donc isotrope; comme cette restriction est une sous-forme
  de $\pi_5$, cela
  entraîne que $\pi_5$ est isotrope, donc hyperbolique puisque c'est
  une forme de Pfister.
\end{proof}

\section{Involutions unitaires et orthogonales}

Dans cette section, $F$ est un corps de caractéristique~$2$. On
désigne par $B$ une algèbre d'Azumaya de degré~$4$ sur une $F$-algèbre
quadratique étale~$Z$ et par $\tau$ une involution unitaire sur $B$
qui fixe~$F$. Soit $\Sym(\tau)\subset B$ le $F$-espace vectoriel des
éléments symétriques sous $\tau$ et $\Srd_\tau$ la restriction de la
\guillemotleft\,seconde trace\guillemotright\ $\Srd_B$ à $\Sym(\tau)$;
c'est une forme quadratique $\Srd_\tau\colon\Sym(\tau)\to F$.

\begin{theo}
  \label{th:unit}
  Il existe une $2$-forme quadratique de Pfister $\pi_2$ et une
  $4$-forme quadratique de Pfister $\pi_4$ déterminées de manière
  unique à isométrie près par la propriété suivante: la classe de
  $\Srd_\tau$ dans le groupe de Witt se décompose comme suit:
  \[
    \Srd_\tau=[1,1]+\pi_2+\pi_4.
  \]
  De plus,
  \begin{enumerate}
  \item[\emph{(i)}]
    $\pi_4$ est multiple de $\pi_2$, c'est-à-dire qu'il existe $a_1$,
  $a_2\in F^\times$ tels que
  $\pi_4=\langle1,a_1,a_2,a_1a_2\rangle\pi_2$;
\item[\emph{(ii)}]
  la forme $\pi_2$ est hyperbolique si et seulement si l'algèbre
  $B$ se décompose en produit tensoriel d'algèbres de
  quaternions stables sous $\tau$;
\item[\emph{(iii)}]
  la forme $\pi_4$ est hyperbolique si et seulement si $\Sym(\tau)$
  contient un élément non central dont le carré est central.
  \end{enumerate}
\end{theo}

La démonstration est en tout point semblable à celle du
théorème~\ref{th:symp}: il suffit d'y remplacer $\Symd(\sigma)$ par
$\Sym(\tau)$, $\Srp_\sigma$ par $\Srd_\tau$ et l'algèbre de
quaternions déployée $Q$ par une $F$-algèbre quadratique étale
déployée. L'existence d'une $F$-algèbre biquadratique étale
$L\subset\Sym(\tau)$ est assurée
par~\cite[Th.~7.4]{BGBT1} et la relation entre compositions d'espaces
quadratiques de dimension~$4$ et $2$-formes quadratiques de Pfister
est explicitée dans~\cite[Prop.~3.9]{BT}. D'ailleurs, le cas où le
centre $Z$ est déployé 
(et donc $B\simeq E\times E^{\text{op}}$ pour une $F$-algèbre simple
centrale $E$ de degré~$4$) est déjà traité dans~\cite{T2trace}.
\medbreak

Le cas des involutions orthogonales requiert des ajustements plus
substantiels car les formes quadratiques en jeu sont
singulières. Suivant~\cite[D.3.2]{K}, on dit qu'une forme quadratique
est \emph{totalement singulière} si sa forme polaire est identiquement
nulle. Les formes totalement singulières sont donc les formes
diagonales $b(X,X)$ des formes bilinéaires symétriques $b$. Une forme
totalement singulière est appelée \emph{quasi $k$-forme de Pfister} si
c'est la forme diagonale d'une $k$-forme de Pfister bilinéaire,
voir~\cite[D.11.1]{K}.

Soit $C$ une $F$-algèbre simple centrale de degré~$4$ et $\rho$ une
involution orthogonale sur $C$. Soit encore $\Sym(\rho)$ l'espace des
éléments symétriques sous $\rho$ et $\Srd_\rho\colon \Sym(\rho)\to F$
la restriction de $\Srd_C$ à $\Sym(\rho)$. On sait définir le
déterminant $\det\rho\in F^\times/F^{\times2}$:
voir~\cite[(7.2)]{BoI}. Soit 
$\pi'_1$ la quasi $1$-forme de Pfister diagonale de
$\langle1,\delta\rangle$, où $\delta\in F^\times$ représente
$\det\rho$. 

\begin{theo}
  \label{th:orth}
  La forme quadratique $\Srd_\rho$ se décompose en somme orthogonale
  \[
    \Srd_\rho\simeq[0,0]\perp[1,1]\perp\varphi
  \]
  où $\varphi$ est une forme quadratique totalement singulière de
  dimension~$6$. La forme $\pi'_3=\pi'_1\perp\varphi$ est une
  quasi $3$-forme de Pfister et les formes $\varphi$ et $\pi'_3$ sont
  déterminées de manière unique à isométrie 
  près par $(C,\rho)$. De plus,
  \begin{enumerate}
  \item[\emph{(i)}]
    $\pi'_3$ est multiple de $\pi'_1$, c'est-à-dire qu'il existe
    $a_1$, $a_2\in F^\times$ tels que
  $\pi'_3=\langle1,a_1,a_2,a_1a_2\rangle\pi'_1$;
\item[\emph{(ii)}]
  la forme $\pi'_1$ est quasi-hyperbolique si et seulement si
  l'algèbre 
  $C$ se décompose en produit tensoriel d'algèbres de
  quaternions stables sous $\rho$;
\item[\emph{(iii)}]
  la forme $\pi'_3$ est hyperbolique si et seulement si $\Sym(\rho)$
  contient un élément non central dont le carré est central.
  \end{enumerate}
\end{theo}

\begin{proof}
  Encore une fois, \cite[Th.~7.4]{BGBT3} donne une $F$-algèbre
  biquadratique étale $L\subset\Sym(\rho)$, et les mêmes arguments que
  dans la proposition~\ref{prop:decorth} établissent que
  $\Sym(\rho)=L\stackrel{\perp}{\oplus} W_1 \stackrel{\perp}{\oplus}
  W_2 \stackrel{\perp}{\oplus} W_3$ et que $\Srd_\rho(x)=T_i(x^2)$
  pour $x\in W_i$. À présent, $W_i$ est un module libre de rang~$1$
  sur $L_i$. Lorsque $(C,\rho)$ est isomorphe à $(M_4(F),t)$, où $t$
  est la transposition---ce qui est toujours le cas si $F$ est
  fini---on peut choisir pour $L$ l'algèbre diagonale et retrouver la
  même situation que dans la démonstration de la
  proposition~\ref{prop:decorth}, avec $F$ à la place de $Q$. On voit
  que si $w_i\in W_i$ est de déterminant non nul, alors $W_i=w_iL_i$,
  pour $i=1$, $2$, $3$. Dans le cas général, pour $F$ infini, un
  argument de densité établit l'existence de $w_i\in W_i$ tel que
  $\Nrd_C(w_i)\neq0$, ce qui entraîne encore $W_i=w_iL_i$ pour $i=1$,
  $2$, $3$. Pour le calcul de $\Srd_\rho\rvert_{W_i}$, on peut de plus
  supposer $\Srd_\rho(w_i)\neq0$. Un analogue de la
  proposition~\ref{prop:decorth} donne 
  $\Srd_\rho(w_ix) = T_i(w_i^2x^2)$ pour $x\in L_i$, donc
  $\Srd_\rho\rvert_{W_i}$ est 
  isométrique au transfert suivant $T_i$ de la forme $w_i^2X^2$ sur
  $L_i$. La forme $\Srd_\rho\rvert_{W_i}$ est donc totalement
  singulière; par rapport à la base $w_i$, $w_i(\Srd_\rho(w_i)+w_i^2)$
  de $W_i$ on calcule qu'elle se diagonalise en
  $\langle\Srd_\rho(w_i),\, \Srd_\rho(w_i)N_i(w_i^2)\rangle
  =\langle\Srd_\rho(w_i)\rangle \langle1,\Nrd_{C}(w_i)\rangle$. En 
  examinant le cas où $L$ est déployée on voit que
  $W_i\subset\Symd(\rho)$, donc $\Nrd_C(w_i)$ représente $\det\rho$
  d'après la définition donnée en~\cite[(7.2)]{BoI}. Dès lors,
  \[
    \Srd_\rho\rvert_{W_i}\simeq \langle\Srd_\rho(w_i)\rangle \pi'_1
    \qquad\text{pour $i=1$, $2$, $3$}.
  \]
  Soit $a_i=\Srd_\rho(w_i)$ pour $i=1$, $2$. Comme dans la
  proposition~\ref{prop:compo}, on observe que $w_1w_2+w_2w_1\in W_3$
  et $\Srd_\rho(w_1w_2+w_2w_1)=\Srd_\rho(w_1)\Srd_\rho(w_2)$. Dès
  lors, $\Srd_\rho\rvert_{W_3}$ représente~$a_1a_2$, et
  $\Srd_\rho\rvert_{W_3} \simeq\langle a_1a_2\rangle\pi'_1$. Par
  ailleurs, $\Srd_\rho\rvert_L\simeq[0,0]\perp[1,1]$ comme
  précédemment, donc la décomposition orthogonale de $\Sym(\rho)$
  donne
  \[
    \Srd_\rho\simeq[0,0]\perp[1,1]\perp \langle a_1\rangle \pi'_1
    \perp \langle a_2\rangle \pi'_1 \perp \langle a_1a_2\rangle\pi'_1.
  \]
  On obtient ainsi la décomposition souhaitée, avec $\varphi=\langle
  a_1,a_2,a_1a_2\rangle\pi'_1$ et donc
  \[
    \pi'_3=\langle1,a_1,a_2,a_1a_2\rangle\pi'_1.
  \]
  La forme $\varphi$
  est déterminée de manière unique, puisque c'est la restriction de
  $\Srd_\rho$ au radical de sa forme polaire. Les énoncés~(ii) et
  (iii) se démontrent en adaptant le raisonnement des
  propositions~\ref{prop:pi3} et \ref{prop:pi5}.
\end{proof}

L'énoncé~(ii) a été démontré par d'autres arguments
dans~\cite[Prop.~3.7]{KPS}. 

\bibliographystyle{amsplain}

\bibliography{Srp}

\end{document}